\providecommand{\U}[1]{\protect\rule{.1in}{.1in}}
\begin{document}

\journalname{ }
\title{Optimality conditions in convex multiobjective SIP\thanks{This research was
partially cosponsored by the Ministry of Economy and Competitiveness (MINECO)
of Spain, and by the European Regional Development Fund (ERDF) of the European
Commission, Project MTM2014-59179-C2-1-P.}}
\author{Miguel A. Goberna\inst{1}
\and Nader Kanzi\inst{2}}
\institute{Department of Mathematics, University of Alicante, Alicante, Spain.
\email{mgoberna@ua.es}%
\and Department of Mathematics, Payame Noor University (PNU), Tehran, Iran.
\email{ nad.kanzi@gmail.com}%
}
\maketitle

\begin{abstract}
The purpose of this paper is to characterize the weak efficient solutions, the
efficient solutions, and the isolated efficient solutions of a given vector
optimization problem with finitely many convex objective functions and
infinitely many convex constraints. To do this, we introduce new and already
known data qualifications (conditions involving the constraints and/or the
objectives) in order to get optimality conditions which are expressed in terms
of either Karusk-Kuhn-Tucker multipliers or a new gap function associated with
the given problem.

\end{abstract}

\section{Introduction}

An optimization problem%
\begin{equation}
(P)\ \ \ \ \text{minimize}\ f(x):=\left(  f_{1}(x),\ldots,f_{p}(x)\right)
\ \text{subject to}\ g_{t}(x)\leq0,\ t\in T,\label{P}%
\end{equation}
with real-valued \emph{objective functions} $f_{i}:\mathbb{R}^{n}%
\longrightarrow\mathbb{R},$ $i\in I:=\{1,\ldots,p\},$ and extended real-valued
constraint functions $g_{t}:\mathbb{R}^{n}\longrightarrow\overline{\mathbb{R}%
}:=[-\infty,+\infty]$, $t\in T$, is called \emph{scalar} when $p=1$ and
\emph{multiobjective} when $p\geq2,$ it is called \emph{ordinary} when $T$ is
finite and \emph{semi-infinite} otherwise, and it is called \emph{convex}
(\emph{linear}, etc.) when all the involved functions (called the \emph{data})
are convex (resp. linear, etc.). The Euclidean spaces $\mathbb{R}^{n}$ and
$\mathbb{R}^{p}$ are called \emph{decision space} and \emph{objective} (or
\emph{outcome}) \emph{space}, respectively.

The objective of this paper is to characterize different types of solutions
for a consistent convex multiobjective semi-infinite programming (MOSIP in
brief), i.e., we consider a problem $(P)$ as in (\ref{P}) with $T$ infinite
(not necessarily equipped with some topology), $p\geq2,$ and convex data. This
type of problem arises in a natural way in robust convex multiobjective
programming when at least one of the uncertainty sets for the constraints is
infinite (see \cite{KL14} and references therein). We also assume that all
constraint functions are lower semicontinuous (lsc for short). So, the
\emph{feasible set} of $(P)$, denoted by $S$, is a nonempty closed convex set.
The characterization of the solutions of $\left(  P\right)  $ in this paper
are expressed either in terms of multipliers or in terms of an associated
scalar function called \emph{gap function}.

Many works have been published on optimality conditions in ordinary
multiobjective convex programming and in scalar semi-infinite programming via
\emph{data qualifications} (DQs in short), called \emph{constraint
qualifications} (CQs) and \emph{objective qualifications} (OQs) when they
exclusively involve the constraints and the objectives, respectively (see,
e.g.,\cite{HirLem}, \cite{GorLop}, \cite{LiNah}, \cite{MORDNG}, etc.). The
closest antecedents for this paper are the following works dealing with
Karush-Kuhn-Tucker (KKT in short) type optimality conditions for different
classes of multiobjective semi-infinite programming problems:

\begin{itemize}
\item \cite{GOVATO}, on linear MOSIP problems.

\item \cite{GOVATO2}, where the data are convex and satisfy the assumption
that $(P)$ is \emph{continuous}, meaning that $T$ is a compact Hausdorff
topological space and the mapping $\left(  t,x\right)  \longmapsto g_{t}(x)$
is continuous (and so finite-valued) on $T\times\mathbb{R}^{n}$. This is the
case whenever $T$ is a finite set equipped with the discrete topology and the
constraint functions are finite-valued. The continuity of $(P)$ is a strong
assumption allowing to get KKT conditions via the linearization of the data.

\item \cite{GURU}, where the data are differentiable.

\item \cite{CHUKIM}, \cite{CHUYAO}, \cite{KAN2}, \cite{KAN1}, and \cite{KANO},
where the data are locally Lipschitz on $\mathbb{R}^{n},$ assumption which
allows to apply the non-smooth analysis machinery.
\end{itemize}

Observe that a proper convex function is subdifferentiable (but not
necessarily differentiable) at the interior of its domain, while it is also
locally Lipschitz on that set under some additional assumption (e.g., when it
is bounded above on some nonempty open subset of its domain \cite[Corollary
2.2.13]{Zalinescu}). So, the class of MOSIP problems considered in
\cite{GOVATO} is a subclass of those considered in \cite{GURU}, \cite{CHUKIM},
\cite{CHUYAO}, \cite{KAN2}, \cite{KAN1}, and \cite{KANO}, while \cite{GOVATO2}
does not, except in particular cases (e.g., when all data are bounded above on
$\mathbb{R}^{n}$). Notice also that the class of MOSIP problems in
\cite{GOVATO2} is much smaller than the one considered in this paper. In
conclusion, to the best of the authors' knowledge, this is the first paper
providing optimality conditions for non-continuous convex MOSIP problems.

Concerning the second tool used in this paper to get optimality conditions,
let us recall that the term `gap function' was the name given in \cite{AUS76}
to the objective function of the reformulation of certain variational
inequality as a scalar optimization unconstrained problem. By extension, this
is the name usually given to those functions allowing to reformulate a given
optimization problem as a simpler one. Gap functions have been proposed for
scalar optimization problems with smooth convex data \cite{HEA}, ordinary
multiobjective problems with differentiable data \cite{CHGOYA}, ordinary
multiobjective problems with nonsmooth data (\cite{SARSOLEYMAN}, \cite{Sol},
\cite{SOLEY}), scalar semi-infinite problems with quasiconvex data
\cite{KanziSoleymani}, etc. In this paper we introduce a new gap function in
order to get optimality conditions in convex MOSIP.

We analyze in this paper three types of optimal solutions for $(P),$ namely:

\begin{itemize}
\item Weak efficient solutions, which are commonly considered those solutions
that can be most easily computed. A recent result on vector optimization
problems (where the decision and the objective spaces are possibly infinite
dimensional) proved in \cite{DIGOLOLO}, will be used to characterize weak
solutions of $\left(  P\right)  $ in some particular cases.

\item Efficient solutions, which are the most used in practice.

\item Isolated efficient solutions, which are the most stable under small
perturbations of the objective functions, as we comment after its definition below.
\end{itemize}

The definitions of efficient and weak efficient solutions involve orderings in
$\mathbb{R}^{p}$ induced by its positive cone $\mathbb{R}_{+}^{p}$\ and by its
interior $\mathbb{R}_{++}^{p}.$\ Indeed,\ given $x,y\in\mathbb{R}^{p} $, we
write $x\leqq y\ $(resp.$\ x<y$) when $x_{i}\leq y_{i}\ $(resp.$\ x_{i}<y_{i}%
$) for all $i\in I$. Moreover, we write $x\leq y$ when $x\leqq y$ and $x\neq
y$.

An element $\widehat{x}\in S$ is said to be an \emph{efficient solution}
(resp. a \emph{weak efficient solution}) for $(P)$ if there is no $x\in S$
satisfying $f(x)\leq f(\widehat{x})$ (resp., $f(x)<f(\widehat{x})$). Moreover,
following \cite{GGR}, we say that an element $\widehat{x}\in S$ is an
\emph{isolated efficient} solution for $(P)$ if there exists a constant
$\nu>0$ such that
\[
\max_{i\in I}\{f_{i}(x)-f_{i}(\widehat{x})\}\geq\nu||x-\widehat{x}||,\text{
}\forall x\in S.
\]
The weak efficient solutions and the efficient solutions have been
characterized in \cite{GOVATO2} for convex continuous MOSIP problems, while it
seems that no previous work deals with isolated efficient solution in the
convex setting. Obviously, any isolated efficient solution is an efficient
solution (actually the unique efficient solution) and any efficient solution
is a weak efficient solution. The concept of isolated efficient solution is an
extension, to $p>1,$ of the concept of isolated local minimum introduced in
\cite{AUS84} to study the stability of scalar optimization problems. The
isolated minima are called strongly unique optimal solutions in the linear
semi-infinite setting, where they also play a crucial role in the sensitivity
analysis with respect to perturbations of the objective function
(\cite{GOGOGUTO}, \cite{GOTETO}). We now sketch the role that isolated
efficient solutions could play in the MOSIP setting.

Assume that $\widehat{x}$ is an isolated efficient solution of $\left(
P\right)  $ with associated constant $\nu.$\ If we perturb the objective
function $f_{i}$ with a function $p_{i}:S\longrightarrow\mathbb{R}$ which is
locally Lipschitz at $\widehat{x}$ with constant $L_{i}<$\ $\nu,$ $i\in I,$
and define $L:=\max_{i\in I}L_{i}<\nu,$ then we have
\[%
\begin{array}
[c]{ll}%
\max_{i\in I}\{\left(  f_{i}+p_{i}\right)  (x)-\left(  f_{i}+p_{i}\right)
(\widehat{x})\} & \geq\max_{i\in I}\{\left[  f_{i}(x)-f_{i}(\widehat
{x})\right]  -L_{i}||x-\widehat{x}||\}\\
& \geq\max_{i\in I}\{\left[  f_{i}(x)-f_{i}(\widehat{x})\right]
\}-L||x-\widehat{x}||\\
& \geq\left(  \nu-L\right)  ||x-\widehat{x}||
\end{array}
\]
for any $x\in S,$ so that $\widehat{x}$ is still an isolated efficient
solution of the perturbed MOSIP problem%
\[
\ \text{minimize}\ \left(  f_{1}(x)+p_{1}\left(  x\right)  ,\ldots
,f_{p}(x)+p_{p}\left(  x\right)  \right)  \ \text{subject to}\ g_{t}%
(x)\leq0,\ t\in T.
\]
In the particular case of linear perturbations of the form $p_{i}\left(
x\right)  =c_{i}^{\prime}x,$ with $c_{i}\in\mathbb{R}^{n},$ $i\in I,$ we can
consider the following parametric convex MOSIP problem,
\[
(P_{C})\ \ \ \ \text{minimize}\ f(x)+Cx\ \text{subject to}\ g_{t}%
(x)\leq0,\ t\in T,
\]
whose parameter is the matrix $C:=\left[  c_{1}\mid...\mid c_{p}\right]
^{\prime}\in\mathcal{M}_{p\times n},$ the real linear space of $p\times n$
matrices equipped with any norm and null element $0_{p\times n}$ (the zero
$p\times n$ matrix). In the same way that sensitivity analysis in scalar
optimization deals with the optimal value function of a parametric (scalar)
problem, the sensitivity analysis of $(P_{C})$ is focused on the multifunction
$\mathcal{F}:\mathcal{M}_{p\times n}\rightrightarrows\mathbb{R}^{p}$
associating to each $C\in\mathcal{M}_{p\times n}$ the image of the efficient
set of $(P_{C})$ by its objective function. If $\widehat{x}$ is an isolated
efficient solution of $(P),$ then there exists a neighborhood $\mathcal{N}$ of
$0_{p\times n}$ in $\mathcal{M}_{p\times n}$ where $\max_{i\in I}\left\Vert
c_{i}\right\Vert <\nu$ for all $C=\left[  c_{1}\mid...\mid c_{p}\right]
^{\prime}\in\mathcal{N}$. Since $\widehat{x}\ $is the unique efficient
solution of $(P_{C})$ for any $C\in\mathcal{N},$ the restriction of
$\mathcal{F}$ to $\mathcal{N}$ is the affine function $\mathcal{F}\left(
C\right)  =f(\widehat{x})+C\widehat{x}.$

The paper is organized as follows. Section 2 introduces the necessary
notations and concepts, and analyzes the relationships between the data
qualifications used in this paper and other well known ones. The remaining
three sections provide different characterizations of weakly efficient
solutions (Section 3), efficient solutions (Sections 4), and isolated
efficient solutions (Section 5) via KKT-type conditions, gap function, and
continuous linear functionals on the constraint space (only in Section 3).

\section{Data qualifications}

Our notation and terminology are basically standard. Given a locally convex
Hausdorff topological vector space $Z$ and a set $\emptyset\neq M\subseteq Z
$, the \emph{closure} of $M$, the \emph{interior} of $M,$ the \emph{relative
interior} of $M,$\ the \emph{strong quasi-relative interior} of $M,$ the
boundary of $M,$ the \emph{linear span} of $M,$ the \emph{convex hull} of $M$,
and the \emph{convex cone} (containing the origin) generated by $M$ are
respectively denoted by $\overline{M}$, $\operatorname*{int}(M),$
$\operatorname*{ri}(M), $\ $\operatorname*{sqri}\left(  M\right)  ,$
$\operatorname*{bd}\left(  M\right)  ,$ $\operatorname*{span}\left(  M\right)
,$ $\operatorname*{conv}(M)$, and $\operatorname*{cone}(M)$. The null vector
of $Z$ is denoted by $0_{Z}$ and the topological dual of $Z $ is denoted by
$Z^{\ast},$ always equipped with the weak$^{\ast}$ topology. The duality
product of $z^{\ast}\in Z^{\ast}$ by $z\in Z$ is denoted by $\left\langle
z^{\ast},z\right\rangle .$ In this paper we handle three infinite dimensional
spaces related with the index set $T:$

\begin{itemize}
\item $\mathbb{R}^{T}$ is the space of real-valued functions on $T$ equipped
with the product topology. The \emph{support} of $\lambda\in\mathbb{R}^{T}$ is
$\left\{  t\in T\mid\lambda_{t}:=\lambda\left(  t\right)  \neq0\right\}  . $

\item $l_{\infty}\left(  T\right)  $ is the Banach space of all bounded
functions from $T$ to $\mathbb{R}$ equipped with the supremum norm.

\item When $T$ is a compact topological space, $\mathcal{C}\left(  T\right)  $
represents the subspace of $l_{\infty}\left(  T\right)  $ formed by the
continuous functions.
\end{itemize}

When $Z=\mathbb{R}^{n},$ the duality product $\left\langle x,y\right\rangle $
coincides with the \emph{scalar product} $x^{\prime}y,$ for $x,y\in
\mathbb{R}^{n}$, $||x||=\sqrt{x^{\prime}x}$ denotes the \emph{Euclidean norm}
of $x, $ $\mathbb{B}_{n}$ represents the \emph{closed unit ball} and $0_{n}$
stands for the null vector $0_{n}:=0_{\mathbb{R}^{n}}.$ Given $A,B\subseteq
\mathbb{R}^{n},$ one says that $A$ is \emph{closed regarding} $B$ if
$B\cap\overline{A}=B\cap A.$

The \emph{negative polar cone} and the \emph{strictly negative polar cone} of
$\emptyset\neq M\subset\mathbb{R}^{n}$ (not necessarily a convex cone) are
defined respectively by $M^{0}:=\{d\in\mathbb{R}^{n}\mid x^{\prime}%
d\leq0,\ \forall x\in M\},$ and $M^{-}:=\{d\in\mathbb{R}^{n}\mid x^{\prime
}d<0,\ \forall x\in M\}.$ The bipolar theorem (see, e.g., \cite[Proposition
III.4.2.7]{HirLem}) states that $(M^{0})^{0}=\overline{\operatorname*{cone}%
}(M).$ Also, it is well-known that if $M^{-}\neq\emptyset$, then
$\overline{M^{-}}=M^{0}$.

Regarding extended functions, given $h:\mathbb{R}^{n}\longrightarrow
\overline{\mathbb{R}}:=\mathbb{R\cup}\left\{  \pm\infty\right\}  ,$
$\operatorname{dom}h$ denotes the domain of $h$\ and, for $x\in h^{-1}\left(
\mathbb{R}\right)  ,$ $h^{\prime}\left(  x;d\right)  \ $and $\partial h\left(
x\right)  $ denote the \emph{directional derivative} of $h$ at $x$ in the
direction of $d\in\mathbb{R}^{n}$ and the \emph{Fenchel-Moreau
subdifferential} of $h$ at $x,$ respectively. By $\Gamma\left(  \mathbb{R}%
^{n}\right)  $ we denote the class of all lsc, convex, and proper functions
from $\mathbb{R}^{n}$ to $\overline{\mathbb{R}}.$

Given $\widehat{x}\in M\subseteq\mathbb{R}^{n}$, we denote the
\emph{contingent cone} (also called \emph{Bouligand tangent cone}) and the
\emph{feasible direction cone} of $M$ at $\widehat{x},$ respectively, by
$C(M,\widehat{x})$ and by $D(M,\widehat{x})$, i.e.,
\[
C(M,\widehat{x}):=\big\{v\in\mathbb{R}^{n}\mid\exists t_{r}\downarrow
0,\ \exists v_{r}\rightarrow v\ :\ \widehat{x}+t_{r}v_{r}\in M,\ \forall
r\in\mathbb{N}\big\},
\]
and
\[
D(M,\widehat{x}):=\big\{d\in\mathbb{R}^{n}\mid\exists\delta>0\ :\ \widehat
{x}+\varepsilon d\in M,\ \forall\varepsilon\in(0,\delta)\big\}.
\]
If $M$ is a closed convex set, then $\overline{D(M,\widehat{x})}%
=C(M,\widehat{x})$ for all $\widehat{x}\in M$ \cite[Proposition III.5.2.1]%
{HirLem}. Then, the \emph{normal cone} to $M$ at $\widehat{x}$ is
$N(M,\widehat{x}):=D^{0}(M,\widehat{x})=C^{0}(M,\widehat{x})$.

We associate with $(P)$ the following three functions:

\begin{itemize}
\item The \emph{supremum} (or \emph{marginal})\emph{\ function }%
$\psi:\mathbb{R}^{n}\longrightarrow\overline{\mathbb{R}}$ such that
\begin{equation}
\psi(x):=\sup_{t\in T}g_{t}(x).\label{2.3}%
\end{equation}
This lsc proper convex function allows to reformulate $\left(  P\right)  $ as
an ordinary multiobjective programming problem with a single constraint:
\[
(P_{1})\ \ \ \text{\ minimize}\ f(x):=\left(  f_{1}(x),\ldots,f_{p}(x)\right)
\ \text{subject to}\ \psi(x)\leq0.
\]

\item The \emph{infimum function }$\iota:\mathbb{R}^{n}\longrightarrow
\overline{\mathbb{R}}$ such that
\begin{equation}
\iota(x):=\inf_{t\in T}g_{t}(x).\label{2.4}%
\end{equation}

\item The \emph{gap function} $\vartheta:$ $\mathbb{R}^{n}\times\prod
_{i=1}^{p}\partial f_{i}\left(  x\right)  \times\Delta_{+}^{p}\longrightarrow
\overline{\mathbb{R}},$ such that
\[
\vartheta(x,\xi,\lambda):=\sup_{y\in S}\Big\{\sum_{i=1}^{p}\lambda_{i}\xi
_{i}^{\prime}(x-y)\Big\},
\]
with $\Delta_{+}^{p}:=\left\{  \left(  \alpha_{1},...,\alpha_{p}\right)
\in\mathbb{R}_{+}^{p}\mid\sum_{i=1}^{p}\alpha_{i}=1\right\}  .$ In the
particular case that $p=1$, $\lambda=1$, and $f$ is a convex differentiable
function, $\vartheta$ coincides with the gap function introduced in \cite{HEA}.
\end{itemize}

We also associate with $(P)$ and $\widehat{x}\in S$ the following sets:

\begin{itemize}
\item The \emph{sublevel sets }of $\widehat{x}$ w.r.t. the
objectives,\emph{\ }
\[
Q^{i}(\widehat{x}):=\Big\{x\in S\mid f_{l}(x)\leq f_{l}(\widehat{x}),\ \forall
l\in I\setminus\{i\}\Big\},\text{ }i\in I,
\]
with the convention that $Q^{1}(\widehat{x})=S$ in the scalar case ($p=1$).

\item The \emph{set of }$\varepsilon-$\emph{active indices }at $\widehat{x}, $
with $\varepsilon>0,$\
\[
T_{\varepsilon}(\widehat{x}):=\{t\in T\mid-\varepsilon\leq g_{t}(\widehat
{x})\}.
\]

\item The set of \emph{active indices} at $\widehat{x}$, $T(\widehat
{x}):=T_{0}(\widehat{x}),$ i.e., $T(\widehat{x})=\{t\in T\mid g_{t}%
(\widehat{x})=0\}.$

\item The convex hull of the set of subgradients at $\widehat{x}$ of the
objective functions:
\[
F_{\ast}(\widehat{x}):=\operatorname*{conv}(F(\widehat{x})),\text{ where
}F(\widehat{x}):=\bigcup_{i=1}^{p}\partial f_{i}(\widehat{x}).
\]

\item The convex conical hull of the set of subgradients at $\widehat{x}$ of
the active constraints at $\widehat{x}:$
\[
G_{\ast}(\widehat{x}):=\operatorname*{cone}\big(G(\widehat{x})\big),\text{
where }G(\widehat{x}):=\bigcup_{t\in T(\widehat{x})}\partial g_{t}(\widehat
{x}).
\]
If $t\in T(\widehat{x}),$ $\xi\in\partial g_{t}(\widehat{x}),$ and $d\in
D(S,\widehat{x}),$ there exists $\delta>0$ such that $\widehat{x}+\delta d\in
S$ and, so, $0\geq g_{t}\left(  \widehat{x}+\delta d\right)  \geq\delta
\xi^{\prime}d.$ Since $\xi^{\prime}d\leq0$ for all $d\in D(S,\widehat{x}),$
$\xi\in D^{0}(S,\widehat{x})=N(S,\widehat{x}).$ Thus, $G(\widehat{x})\subseteq
N(S,\widehat{x})$ and taking convex hulls in both sides we get%
\begin{equation}
G_{\ast}(\widehat{x})\subseteq N(S,\widehat{x}).\label{2.1}%
\end{equation}

\end{itemize}

We consider in this paper two global CQs (i.e., CQs not involving $\widehat
{x}\in S$), nine local CQs, two local \emph{data qualifications} (DQs in
brief, i.e., conditions involving all data), and one objective qualification
(OQ)\ for our given convex MOSIP problem $\left(  P\right)  .$ Most of these
conditions are introduced (or at least inspired in conditions used) in works
mentioned in Section 1:

\begin{itemize}
\item The \emph{Slater CQ} (SCQ, \cite{GOVATO2}) holds when there is a
$x_{0}\in\mathbb{R}^{n}$ (called \emph{Slater point}) such that $g_{t}%
(x_{0})<0$ for all $t\in T$.

\item The \emph{strong Slater CQ} (SSCQ, \cite{GOVATO}) holds when there exist
$x_{0}\in\mathbb{R}^{n}$ (called \emph{strong Slater point}) and
$\varepsilon>0$ (\emph{slack}) such that $g_{t}(x_{0})\leq-\varepsilon,$ for
all $t\in T.$ Obviously, $\left(  P\right)  $ satisfies SSCQ if and only if
$(P_{1})$ satisfies SCQ.

\item The \emph{Mangasarian-Fromovitz CQ} (MFCQ, \cite{JOTWWE}) holds at
$\widehat{x}$ if $G(\widehat{x})\neq\emptyset$ and $G^{-}(\widehat{x}%
)\neq\emptyset,$ otherwise (this property is also called local Slater CQ in
\cite{GOVATO2}).

\item The \emph{perturbed Mangasarian-Fromovitz CQ} (PMFCQ, \cite{MORDNG})
holds at $\widehat{x}\in S$ if there exists $x_{\ast}\in\mathbb{R}^{n}$ such
that
\[
\inf_{\varepsilon>0}\sup\left\{  \xi^{\prime}x_{\ast}\mid\xi\in%
{\displaystyle\bigcup\limits_{t\in T_{\varepsilon}(\widehat{x})}}
\partial g_{t}(\widehat{x})\right\}  <0.
\]
(This CQ has been recently introduced by Mordukhovich and Nghia in the
framework of non-continuous smooth scalar semi-infinite programming.)

\item The \emph{local Farkas-Minkowski CQ} (LFMCQ, \cite{GorLop}) holds at
$\widehat{x}\in S$ when $N(S,\widehat{x})=G_{\ast}(\widehat{x}).$

\item The \emph{Cottle CQ} (COCQ) holds at $\hat{x}$ when $\{d\in
\mathbb{R}^{n}\mid\psi^{\prime}(\hat{x};d)<0\}\neq\emptyset.$\newline

\item The \emph{Kuhn-Tucker CQ} (KTCQ, \cite{GOVATO2}) at $\widehat{x}\in S$
when
\[
\{d\in\mathbb{R}^{n}\mid\psi^{\prime}(\widehat{x};d)\leq0\}\subseteq
C(S,\widehat{x}).
\]
(Since $S$ is convex,\ we have replaced, at the definition of KTCQ given in
\cite[page 35]{GOVATO2}, the cone of attainable directions of $S$ at
$\widehat{x}$ \ by $C(S,\widehat{x}).$)

\item The \emph{Pshenichnyi-Levin-Valadier CQ} (PLVCQ, \cite{LiNah}) holds at
$\widehat{x}$ when $\partial\psi(\widehat{x})\subseteq G_{\ast}(\widehat{x}).$

\item The \emph{closed cone CQ} (CCCQ) holds at $\widehat{x}$ when $G_{\ast
}(\widehat{x})$ is closed.

\item The \emph{Abadie CQ} (ACQ, \cite{LiNah}) holds at $\widehat{x}\in S$
when $G(\widehat{x})\neq\emptyset$ and $G^{0}(\widehat{x})\subseteq
C(S,\widehat{x}).$

\item The \emph{weak Abadie DQ} (WADQ, \cite{GorLop}) holds at $\widehat{x}\in
S$ when $G(\widehat{x})\neq\emptyset$ and $F^{-}(\widehat{x})\cap
G^{0}(\widehat{x})\subseteq C(S,\widehat{x}).$

\item The\textit{\ }\emph{extended Abadie DQ} (EADQ, \cite{MAEDA}) holds at
$\widehat{x}$ when $G(\widehat{x})\neq\emptyset$ and
\[
F^{0}(\widehat{x})\cap G^{0}(\widehat{x})\subseteq\bigcap_{i=1}^{p}%
C(Q^{i}(\widehat{x}),\widehat{x}).
\]

\item The\textit{\ }\emph{Maeda OQ }(MOQ, \cite{MAEDA}) holds at $\widehat{x}
$ when%
\[
\ F^{0}(\widehat{x})\subseteq\{0_{n}\}\cup\bigcup_{i=i}^{p}\big(\partial
f_{i}(\widehat{x})\big)^{-}.
\]

\end{itemize}

The next result gives a checkable condition equivalent to the \textit{MOQ.}

\begin{proposition}
The MOQ holds at $\widehat{x}\in S$ if and only if $\operatorname*{span}%
\left(  F(\widehat{x})\right)  =\mathbb{R}^{n}.$
\end{proposition}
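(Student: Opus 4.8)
The plan is to restate the condition $\operatorname*{span}\left(F(\widehat{x})\right)=\mathbb{R}^{n}$ in polar-cone language and then treat the two implications separately, using only the elementary facts recalled above: each $\partial f_{i}(\widehat{x})$ is a nonempty compact convex set (since $f_{i}$ is finite-valued and convex, hence continuous), one has $F^{0}(\widehat{x})=\bigcap_{i=1}^{p}\left(\partial f_{i}(\widehat{x})\right)^{0}$, and $d\in\left(\partial f_{i}(\widehat{x})\right)^{-}$ if and only if $\xi^{\prime}d<0$ for every $\xi\in\partial f_{i}(\widehat{x})$. The first observation is that $\operatorname*{span}\left(F(\widehat{x})\right)=\mathbb{R}^{n}$ holds precisely when no nonzero vector is orthogonal to all of $F(\widehat{x})$, i.e. when $F^{0}(\widehat{x})\cap\left(-F^{0}(\widehat{x})\right)=\{0_{n}\}$.

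I would prove the ``only if'' part by contraposition. Assume $\operatorname*{span}\left(F(\widehat{x})\right)\neq\mathbb{R}^{n}$ and pick $d\neq0_{n}$ with $\xi^{\prime}d=0$ for all $\xi\in F(\widehat{x})$. Then $\xi^{\prime}d\leq0$ for all such $\xi$, so $d\in F^{0}(\widehat{x})$; but for each $i$, any $\xi\in\partial f_{i}(\widehat{x})\subseteq F(\widehat{x})$ satisfies $\xi^{\prime}d=0\not<0$, hence $d\notin\left(\partial f_{i}(\widehat{x})\right)^{-}$. Thus $d$ lies in $F^{0}(\widehat{x})$ but not in $\{0_{n}\}\cup\bigcup_{i=1}^{p}\left(\partial f_{i}(\widehat{x})\right)^{-}$, so the MOQ fails. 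This direction is short and I do not anticipate any difficulty with it.

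For the ``if'' part, assume $\operatorname*{span}\left(F(\widehat{x})\right)=\mathbb{R}^{n}$, take an arbitrary $d\in F^{0}(\widehat{x})$ with $d\neq0_{n}$, and seek an index $j$ with $d\in\left(\partial f_{j}(\widehat{x})\right)^{-}$. Since $d\neq0_{n}$ and $F(\widehat{x})$ spans $\mathbb{R}^{n}$, there is $\xi_{0}\in F(\widehat{x})$ with $\xi_{0}^{\prime}d\neq0$, and because $d\in F^{0}(\widehat{x})$ this forces $\xi_{0}^{\prime}d<0$; choose $j$ with $\xi_{0}\in\partial f_{j}(\widehat{x})$. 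What remains is to upgrade ``$\xi_{0}^{\prime}d<0$ for this single subgradient'' into ``$\xi^{\prime}d<0$ for every $\xi\in\partial f_{j}(\widehat{x})$'', and this is the step I expect to be the main obstacle. It is immediate when the objectives are differentiable at $\widehat{x}$, since then each $\partial f_{i}(\widehat{x})$ is a singleton and the statement reduces to Maeda's original differentiable formulation, namely that $\left\{\nabla f_{1}(\widehat{x}),\dots,\nabla f_{p}(\widehat{x})\right\}$ spans $\mathbb{R}^{n}$; in the general nonsmooth convex case one must instead control the exposed face $\left\{\xi\in\partial f_{j}(\widehat{x})\mid\xi^{\prime}d=0\right\}$ of the compact convex set $\partial f_{j}(\widehat{x})$, exploiting that $\xi\mapsto\xi^{\prime}d$ is linear and nonpositive on it, and carrying out this control — presumably by running the argument simultaneously over all indices and invoking the span hypothesis, and possibly at the cost of an additional regularity assumption on the $f_{i}$ at $\widehat{x}$ — is where I anticipate the bulk of the technical effort to lie.
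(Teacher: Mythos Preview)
Your ``only if'' direction is correct and essentially matches the easy half of the paper's argument. Your instinct about the ``if'' direction is also right: the step you flag as the main obstacle is a genuine gap, and it cannot be filled, because the implication $\operatorname*{span}\left(F(\widehat{x})\right)=\mathbb{R}^{n}\Longrightarrow$ MOQ is \emph{false} in the nonsmooth setting.

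A concrete counterexample: take $n=2$, $f_{1}(x)=\max\{x_{1},x_{2}\}$ (and, if you want $p\geq 2$, set $f_{2}=f_{1}$), $\widehat{x}=0_{2}$. Then $\partial f_{1}(0_{2})=\operatorname*{conv}\{(1,0),(0,1)\}$, so $F(\widehat{x})$ spans $\mathbb{R}^{2}$. However $F^{0}(\widehat{x})=-\mathbb{R}_{+}^{2}$ while $\big(\partial f_{1}(0_{2})\big)^{-}=-\mathbb{R}_{++}^{2}$, and the boundary point $d=(-1,0)$ lies in $F^{0}(\widehat{x})\setminus\big(\{0_{2}\}\cup(\partial f_{1}(0_{2}))^{-}\big)$. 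Thus MOQ fails although the span condition holds.

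The paper's own proof goes wrong at its last displayed equality, where it asserts
\[
\mathbb{R}^{n}\setminus\{x\in\mathbb{R}^{n}\mid\xi^{\prime}x\geq 0,\ \forall\xi\in F(\widehat{x})\}=\bigcup_{i=1}^{p}\big(\partial f_{i}(\widehat{x})\big)^{-}.
\]
The left-hand side is $\{d\mid\exists\,\xi\in F(\widehat{x}):\xi^{\prime}d<0\}$, whereas the right-hand side is $\{d\mid\exists\,i:\xi^{\prime}d<0\ \text{for all}\ \xi\in\partial f_{i}(\widehat{x})\}$; only the inclusion $\supseteq$ holds in general. Consequently the paper actually proves only MOQ $\Longrightarrow$ span (which agrees with your contrapositive argument), not the converse. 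Your observation that everything works when the $f_{i}$ are differentiable at $\widehat{x}$ is exactly the regime in which the two sides above coincide, and that is the correct scope of the equivalence.
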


\begin{proof}
Obviously, $\operatorname*{span}\left(  F(\widehat{x})\right)  =\mathbb{R}%
^{n}$ if and only if $\operatorname*{cone}\left(  F(\widehat{x})\cup\left(
-F(\widehat{x})\right)  \right)  =\mathbb{R}^{n}$ if and only if
$\overline{\operatorname*{cone}}\left(  F(\widehat{x})\cup\left(
-F(\widehat{x})\right)  \right)  =\mathbb{R}^{n}$ or, equivalently, by the
Farkas lemma for semi-infinite systems \cite[Corollary 3.1.3]{GorLop},
\[
\left\{  x\in\mathbb{R}^{n}\mid\xi^{\prime}x\leq0,\forall\xi\in F(\widehat
{x})\cup\left(  -F(\widehat{x})\right)  \right\}  =\left\{  0_{n}\right\}  ,
\]
i.e.,
\begin{equation}
F^{0}(\widehat{x})\cap\big(\left\{  x\in\mathbb{R}^{n}\mid\xi^{\prime}%
x\geq0,\forall\xi\in F(\widehat{x})\right\}  \diagdown\left\{  0_{n}\right\}
\big)=\emptyset.\label{2.6}%
\end{equation}
We finally reformulate (\ref{2.6}) as%
\[%
\begin{array}
[c]{ll}%
F^{0}(\widehat{x}) & \subseteq\mathbb{R}^{n}\diagdown\big(\left\{
x\in\mathbb{R}^{n}\mid\xi^{\prime}x\geq0,\forall\xi\in F(\widehat{x})\right\}
\diagdown\left\{  0_{n}\right\}  \big)\\
& \subseteq\left\{  0_{n}\right\}  \cup\left[  \mathbb{R}^{n}\diagdown
\big(\left\{  x\in\mathbb{R}^{n}\mid\xi^{\prime}x\geq0,\forall\xi\in
F(\widehat{x})\right\}  \big)\right] \\
& =\left\{  0_{n}\right\}  \cup\bigcup_{i=i}^{p}\big(\partial f_{i}%
(\widehat{x})\big)^{-},
\end{array}
\]
which amounts to saying that MOQ holds at $\widehat{x}.$\newline
\end{proof}

Obviously, the unique objective qualification in the above list, MOQ, is
independent of the remaining data qualifications. Notice that, under the SSCQ,
if $\psi$ is finite-valued, by \cite[Proposition 1.3.3]{HirLem},%
\begin{equation}%
\begin{array}
[c]{rr}%
\operatorname*{int}S & =\left\{  x\in\mathbb{R}^{n}:\psi\left(  x\right)
<0\right\}  ,\\
\operatorname*{bd}S & =\left\{  x\in\mathbb{R}^{n}:\psi\left(  x\right)
=0\right\}  ,\\
\overline{S} & =\left\{  x\in\mathbb{R}^{n}:\psi\left(  x\right)
\leq0\right\}  .
\end{array}
\label{2.2}%
\end{equation}
In the particular case that $\left(  P\right)  $ is continuous and the SCQ
holds then, by \cite[Theorem 7.9]{GorLop}, $\widehat{x}\in\operatorname*{bd}S$
if and only if $G_{\ast}(\widehat{x})\neq\left\{  0_{n}\right\}  ,$ which
entails $G(\widehat{x})\neq\emptyset.$

We illustrate the above data qualifications with a simple example showing that
the PLVCQ is not implied by the remaining data qualifications.

\begin{example}
\label{Example1}Consider the linear MOSIP problem in one variable%
\[%
\begin{array}
[c]{lll}%
\left(  P\right)  & \text{minimize} & f\left(  x\right)  =\left(
-2x,-x\right) \\
& \text{subject to } & g_{0}(x)=2x,\\
&  & g_{2k+1}(x)=x-\frac{1}{k+1},\text{ }k=0,1,2,\ldots,\\
&  & g_{2k}(x)=3x-\frac{1}{k},\text{ }k=1,2,\ldots.
\end{array}
\]
We observe that $-1$ is strong Slater point, $S=-\mathbb{R}_{+},$
$C(S,\widehat{x})=-\mathbb{R}_{+},$ $N\left(  S,\widehat{x}\right)
=\mathbb{R}_{+}, $ $T(\widehat{x})=\left\{  0\right\}  ,$ $G(\widehat
{x})=\left\{  2\right\}  ,$ $G^{0}(\widehat{x})=-\mathbb{R}_{+},$
$G^{-}(\widehat{x})=-\mathbb{R}_{++},$ $G_{\ast}(\widehat{x})=\mathbb{R}_{+},$
$F(\widehat{x})=\left[  -2,-1\right]  , $
\[
\psi(x)=\left\{
\begin{array}
[c]{ll}%
x, & \text{if}\ x<0,\\
3x, & \text{else;}%
\end{array}
\right.  \psi^{\prime}(\widehat{x};d)=\left\{
\begin{array}
[c]{ll}%
-1, & \text{if }d<0,\\
0, & \text{if }d=0,\\
3, & \text{else,}%
\end{array}
\right.
\]
and $\partial\psi(\widehat{x})=[1,3].$ Moreover, $F^{0}(\widehat{x})\cap
G^{0}(\widehat{x})=\mathbb{R}_{+}\cap\left(  -\mathbb{R}_{+}\right)  =\left\{
0\right\}  ,$ $C(Q^{1}\left(  \widehat{x}\right)  ,\widehat{x})\cap
C(Q^{2}\left(  \widehat{x}\right)  ,\widehat{x})=\left\{  0\right\}  ,$
$\big(\partial f_{1}(\widehat{x})\cup\partial f_{2}(\widehat{x})\big)^{-}%
=\left\{  -2,-1\right\}  ^{-}=\mathbb{R}_{++},$ and $\sup_{t\in T_{\varepsilon
}(\widehat{x})}g_{t}^{\prime}(\widehat{x})=\sup\left\{  1,2,3\right\}  =3$
independently of $\varepsilon>0.$ Thus, SCQ, SSCQ, PMFCQ, LFMCQ, COCQ, MFCQ,
KTCQ, CCCQ, ACQ, WADQ, EADQ, and MOQ hold at $\widehat{x}$ while PLVCQ fails
at that point.
\end{example}

\begin{theorem}
[Connections between data qualifications]\label{ThCQs}Let $\widehat{x}\in S.$
The implications of Diagram 1, where the local conditions are referred to
$\widehat{x}$ and the label $\left[  1\right]  $ ($\left[  2\right]  ,$
$\left[  3\right]  ,$ resp.) besides an arrow stands for `the implication
holds under the assumption that $(P)$ is continuous ($G\left(  \widehat
{x}\right)  \neq\emptyset,$ $p=1,$ resp.)', hold true.
\begin{align*}
& \bigskip%
\begin{array}
[c]{ccccccccc}%
\text{SSCQ} & \leftrightarrows^{\left[  1\right]  } & \text{SCQ} &  &
\text{COCQ} & \rightarrow^{\left[  1\right]  } & \text{KTCQ} & \leftarrow &
\left\{
\begin{array}
[c]{c}%
\text{KTCQ}\\
\text{and PLVCQ}%
\end{array}
\right\} \\
&  & \downarrow^{\left[  2\right]  } & \searrow^{\left[  1,2\right]  } &
\uparrow^{\left[  1\right]  } &  &  & ^{\left[  2\right]  }\swarrow & \\
\text{PMFCQ} & \rightarrow^{\left[  2\right]  } & \text{MFCQ} & \leftarrow &
\left\{
\begin{array}
[c]{c}%
\text{MFCQ}\\
\text{and PLVCQ}%
\end{array}
\right\}  & \rightarrow & \text{ACQ} & \rightarrow^{\left[  3\right]  } &
\text{EADQ}\\
&  & \downarrow^{\left[  1\right]  } &  &  & \nearrow & \downarrow & \swarrow
& \\
&  & \text{LFMCQ} & \leftrightarrow & \left\{
\begin{array}
[c]{c}%
\text{ACQ}\ \\
\text{and CCCQ}%
\end{array}
\right\}  &  & \text{WADQ} &  &
\end{array}
\\
& \text{
\ \ \ \ \ \ \ \ \ \ \ \ \ \ \ \ \ \ \ \ \ \ \ \ \ \ \ \ \ \ \ \ \ \ \ \ \ \ }%
\begin{array}
[c]{c}%
\\
\text{Diagram 1}%
\end{array}
\end{align*}

\end{theorem}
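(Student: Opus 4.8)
The statement is a catalogue of some twenty separate implications, so the plan is to check each arrow of Diagram~1 in turn, grouped by the machinery required. A first, trivial, group needs only the definitions: $\{\text{ACQ and CCCQ}\}\Rightarrow\text{ACQ}$, $\{\text{KTCQ and PLVCQ}\}\Rightarrow\text{KTCQ}$, $\{\text{MFCQ and PLVCQ}\}\Rightarrow\text{MFCQ}$ and $\text{SSCQ}\Rightarrow\text{SCQ}$ are obvious; $\text{ACQ}\Rightarrow\text{WADQ}$ because $F^{-}(\widehat{x})\cap G^{0}(\widehat{x})\subseteq G^{0}(\widehat{x})\subseteq C(S,\widehat{x})$; $\text{EADQ}\Rightarrow\text{WADQ}$ because $F^{-}(\widehat{x})\subseteq F^{0}(\widehat{x})$ and $\bigcap_{i\in I}C(Q^{i}(\widehat{x}),\widehat{x})\subseteq C(S,\widehat{x})$ (each $Q^{i}(\widehat{x})\subseteq S$); and $\text{ACQ}\Rightarrow\text{EADQ}$ for $p=1$ (label $[3]$) since then $Q^{1}(\widehat{x})=S$ and EADQ reduces to $F^{0}(\widehat{x})\cap G^{0}(\widehat{x})\subseteq C(S,\widehat{x})$, which is weaker than ACQ. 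A second, still elementary, group rests on a Slater point and the reformulation $(P_{1})$: $\text{SCQ}\Rightarrow\text{MFCQ}$ (label $[2]$) because the subgradient inequality $\xi^{\prime}(x_{0}-\widehat{x})\leq g_{t}(x_{0})<0$ for $t\in T(\widehat{x})$, $\xi\in\partial g_{t}(\widehat{x})$, puts $x_{0}-\widehat{x}$ in $G^{-}(\widehat{x})$; $\text{PMFCQ}\Rightarrow\text{MFCQ}$ (label $[2]$) because $T(\widehat{x})\subseteq T_{\varepsilon}(\widehat{x})$ for all $\varepsilon>0$ forces the PMFCQ vector $x_{\ast}$ into $G^{-}(\widehat{x})$; and $\text{SCQ}\Leftrightarrow\text{SSCQ}$ (label $[1]$) because, under continuity, $\min_{t\in T}g_{t}(x_{0})$ is attained, so a Slater point is a strong Slater point. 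Throughout one must keep track of the side clause $G(\widehat{x})\neq\emptyset$ appearing in the definitions of MFCQ, ACQ, WADQ and EADQ (equivalently $\widehat{x}\in\operatorname*{bd}S$, $\psi(\widehat{x})=0$).

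The third group is the polar-cone bookkeeping around LFMCQ. By (\ref{2.1}) one always has $G_{\ast}(\widehat{x})\subseteq N(S,\widehat{x})$. Using $N(S,\widehat{x})=C^{0}(S,\widehat{x})$, $\bigl(G_{\ast}(\widehat{x})\bigr)^{0}=G^{0}(\widehat{x})$ and the bipolar theorem $\bigl(G^{0}(\widehat{x})\bigr)^{0}=\overline{\operatorname*{cone}}(G(\widehat{x}))=\overline{G_{\ast}(\widehat{x})}$, taking polars shows that ACQ is equivalent to $N(S,\widehat{x})\subseteq\overline{G_{\ast}(\widehat{x})}$; adjoining CCCQ (``$G_{\ast}(\widehat{x})$ closed'') sharpens this to $N(S,\widehat{x})\subseteq G_{\ast}(\widehat{x})$, which together with (\ref{2.1}) is LFMCQ. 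Conversely LFMCQ yields CCCQ at once and, taking polars once more, $C(S,\widehat{x})=\bigl(N(S,\widehat{x})\bigr)^{0}=\bigl(G_{\ast}(\widehat{x})\bigr)^{0}=G^{0}(\widehat{x})$, i.e.\ ACQ. The one genuinely semi-infinite arrow here is $\text{MFCQ}\Rightarrow\text{LFMCQ}$ under $[1]$: in the continuous case local Slater at $\widehat{x}$ forces $G_{\ast}(\widehat{x})$ to be closed and to coincide with $N(S,\widehat{x})$, which follows from compactness of $T$, continuity of $(t,x)\mapsto g_{t}(x)$, the Ioffe--Tikhomirov/Valadier identity $\partial\psi(\widehat{x})=\overline{\operatorname*{conv}}\bigl(\bigcup_{t\in T(\widehat{x})}\partial g_{t}(\widehat{x})\bigr)$, and the arguments of \cite{GorLop}.

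The fourth group is the directional-derivative calculus for $\psi$ together with the role of PLVCQ. When $(P)$ is continuous, $\psi$ is finite-valued and convex, $\psi^{\prime}(\widehat{x};\cdot)$ is sublinear, and $\psi^{\prime}(\widehat{x};d)=\max\{\xi^{\prime}d\mid\xi\in\partial\psi(\widehat{x})\}$ with $\partial\psi(\widehat{x})$ compact; hence (via the Ioffe--Tikhomirov formula and compactness) $G^{-}(\widehat{x})\neq\emptyset$ is equivalent to COCQ, which gives $\{\text{MFCQ and PLVCQ}\}\Rightarrow\text{COCQ}$ under $[1]$. For $\text{COCQ}\Rightarrow\text{KTCQ}$ under $[1]$: if $\psi^{\prime}(\widehat{x};d_{0})<0$ and $\psi^{\prime}(\widehat{x};d)\leq0$, sublinearity gives $\psi^{\prime}(\widehat{x};d+r^{-1}d_{0})<0$, so $\widehat{x}+\varepsilon(d+r^{-1}d_{0})\in S$ for small $\varepsilon>0$ (one may assume $\psi(\widehat{x})=0$, since otherwise $\widehat{x}\in\operatorname*{int}S$ and $C(S,\widehat{x})=\mathbb{R}^{n}$); thus $d+r^{-1}d_{0}\in D(S,\widehat{x})$ and, letting $r\to\infty$, $d\in\overline{D(S,\widehat{x})}=C(S,\widehat{x})$. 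Finally PLVCQ, $\partial\psi(\widehat{x})\subseteq G_{\ast}(\widehat{x})$, is precisely the device that transfers estimates from $G(\widehat{x})$ to all of $\partial\psi(\widehat{x})$: writing $\xi\in\partial\psi(\widehat{x})$ as $\sum_{j}\mu_{j}\eta_{j}$ with $\mu_{j}\geq0$ and $\eta_{j}\in G(\widehat{x})$ gives $\xi^{\prime}d\leq0$ whenever $d\in G^{0}(\widehat{x})$. Combined with KTCQ this yields $\{\text{KTCQ and PLVCQ}\}\Rightarrow\text{ACQ}$ under $[2]$, and, chaining $\text{MFCQ}\wedge\text{PLVCQ}\Rightarrow\text{COCQ}\Rightarrow\text{KTCQ}$ with it, the unlabeled arrow $\{\text{MFCQ and PLVCQ}\}\Rightarrow\text{ACQ}$; the same circle of ideas together with \cite{GorLop} gives $\text{SCQ}\Rightarrow\{\text{MFCQ and PLVCQ}\}$ under $[1,2]$.

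I expect the fourth group to be the main obstacle. The directional-derivative identities genuinely require $\psi$ to be finite near $\widehat{x}$, so one must either use continuity of $(P)$ or restrict to interior points, and one has to be scrupulous about which of the labels $[1],[2],[3]$ and which occurrence of $G(\widehat{x})\neq\emptyset$ actually enters each arrow. In addition the degenerate case $0\in\partial\psi(\widehat{x})$ (equivalently $S=\{x\in\mathbb{R}^{n}:\psi(x)=0\}$, where $\widehat{x}$ minimizes $\psi$) must be disposed of separately, since there the relation between $N(S,\widehat{x})$, $C(S,\widehat{x})$ and the level-set geometry is no longer routine.
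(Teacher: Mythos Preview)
Your plan is correct and follows the same arrow-by-arrow strategy as the paper, with the same grouping into ``trivial'', ``Slater-type'', ``polar-cone'' and ``directional-derivative'' blocks. The differences are only in technique on a handful of arrows. For $\text{MFCQ}\Rightarrow\text{LFMCQ}$ under $[1]$ and for $\text{SCQ}\Rightarrow\text{PLVCQ}$ under $[1]$ the paper simply cites \cite[Theorem~14(iii)]{GOVATO2} and the Pshenichnyi--Levin--Valadier theorem \cite[p.~267]{HirLem}, whereas you sketch a direct argument through the Ioffe--Tikhomirov identity; both work, and yours is more self-contained. For $\text{COCQ}\Rightarrow\text{KTCQ}$ under $[1]$ the paper goes through $G^{0}(\widehat{x})$ as an intermediate set (showing $\{d:\psi'(\widehat{x};d)<0\}\subseteq G^{0}(\widehat{x})$ and then closing), while your perturbation $d+r^{-1}d_{0}$ leads straight into $D(S,\widehat{x})$; your route is shorter. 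For the unlabelled arrow $\{\text{MFCQ and PLVCQ}\}\Rightarrow\text{ACQ}$ the paper proves $G^{-}(\widehat{x})\subseteq D(S,\widehat{x})$ directly and then closes, rather than chaining through $\text{COCQ}\Rightarrow\text{KTCQ}$ as you do; note that both proofs use compactness of $\partial\psi(\widehat{x})$, which the paper obtains from $[1]$ and then observes (in the Remark following the theorem) can be relaxed to $\widehat{x}\in\operatorname{int}\operatorname{dom}\psi$---so your worry about needing finiteness of $\psi$ near $\widehat{x}$ is shared by the paper and handled the same way. Your separate treatment of the case $G(\widehat{x})=\emptyset$ matches the paper's opening paragraph of the proof.
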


\begin{proof}
We first examine the case when $G(\widehat{x})=\emptyset$ regarding those
implications involving the qualification conditions related with
$G^{-}(\widehat{x})$ and $G^{0}(\widehat{x}),$ that is, MFCQ, ACQ, WADQ, and
EADQ. When [2] (i.e., the negation of\ $G(\widehat{x})=\emptyset$)\ is
required, there is nothing to be proved. On the one hand, since MFCQ fails,
[MFCQ $\Longrightarrow$ LFMCQ] holds. On the other hand, since MFCQ, PLVCQ,
ACQ, WADQ, and EADQ fail simultaneously, all implications of Diagram 1
involving ACQ, WADQ, and EADQ also hold. So, we can assume without loss of
generality that $G(\widehat{x})\neq\emptyset.$\newline[SCQ $\Longrightarrow$
SSCQ]:\ It is a straightforward consequence of the continuity assumption on
$\left(  P\right)  $.\newline[PMFCQ $\Longrightarrow$ MFCQ]: Let
$\varepsilon>0$ be such that $\sup\left\{  \xi^{\prime}x_{\ast}\mid\xi\in%
{\displaystyle\bigcup\limits_{t\in T_{\varepsilon}(\widehat{x})}}
\partial g_{t}(\widehat{x})\right\}  <0.$ Since $G\left(  \widehat{x}\right)
\subset%
{\displaystyle\bigcup\limits_{t\in T_{\varepsilon}(\widehat{x})}}
\partial g_{t}(\widehat{x}),$ $\xi^{\prime}x_{\ast}<0$ for all $\xi\in
G\left(  \widehat{x}\right)  ,$ i.e., $x_{\ast}\in G^{-}(\widehat{x}%
).$\newline[SCQ $\Longrightarrow$ MFCQ]: If $x_{0}$ is a Slater point, $t\in
T(\widehat{x})$ and $\xi_{t}\in\partial g_{t}(\widehat{x})$, then $\xi
_{t}^{\prime}(x_{0}-\widehat{x})\leq g_{t}(x_{0})-g_{t}(\widehat{x})<0$, so
that MFCQ holds at $\widehat{x}$ (actually at any point of $S$).\newline[SCQ
$\Longrightarrow$PLVCQ]: It is \cite[Pshenichnyi-Levin-Valadier theorem, p.
267]{HirLem} (the continuity assumption is essential).\newline[MFCQ
$\Longrightarrow$ LFMCQ]: It is \cite[Theorem 14(iii)]{GOVATO2}.\newline[LFMCQ
$\Longrightarrow$ ACQ]: By the assumption and the closedness of $C(S,\widehat
{x}),$
\[
G^{0}(\widehat{x})=G_{\ast}^{0}(\widehat{x})=N^{0}(S,\widehat{x}%
)=C^{00}(S,\overline{x})=\overline{C(S,\widehat{x})}=C(S,\widehat{x}).
\]
\newline[LFMCQ $\Longrightarrow$ CCCQ]: It follows from the closedness of
$N(S,\widehat{x}).$\newline[(ACQ$\ \wedge$ CCCQ) $\Longrightarrow$ LFMCQ]:
Taking negative polar cones in both sides of the inclusion $G_{\ast}%
^{0}(\widehat{x})=G^{0}(\widehat{x})\subseteq C(S,\widehat{x}),$ and recalling
the closedness of $G_{\ast}(\widehat{x}),$ one has
\[
N(S,\widehat{x})=C^{0}(S,\widehat{x})\subseteq G_{\ast}^{00}(\widehat
{x})=G_{\ast}(\widehat{x}),
\]
while the reverse inclusion holds by (\ref{2.1}).\newline[(KTCQ$\ \wedge$
PLVCQ) $\Longrightarrow$ ACQ]: According to \cite[Theorem 23.2]{ROC},%
\[%
\begin{array}
[c]{ll}%
\big(\partial\psi(\widehat{x})\big)^{0} & =\{d\in\mathbb{R}^{n}\mid
\psi^{\prime}(\widehat{x};d)\geq\xi^{\prime}d\Longrightarrow\xi^{\prime}%
d\leq0\}\\
& =\{d\in\mathbb{R}^{n}\mid\psi^{\prime}(\widehat{x};d)\leq0\},
\end{array}
\]
which, together with $G(\widehat{x})\neq\emptyset,$ PLVCQ and KTCQ, yields
\[
G^{0}(\widehat{x})=\left(  \operatorname*{conv}\left(  G(\widehat{x})\right)
\right)  ^{0}\subseteq\big(\partial\psi(\widehat{x})\big)^{0}=\{d\in
\mathbb{R}^{n}\mid\psi^{\prime}(\widehat{x};d)\leq0\}\subseteq C(S,\widehat
{x}).
\]
[(MFCQ $\wedge$ PLVCQ) $\Longrightarrow$ COCQ]: Let $d\in G^{-}(\widehat{x})$.
Since $G^{-}(\widehat{x})=\Big(\operatorname*{conv}\big(G(\widehat
{x})\big)\Big)^{-},$ the PLVCQ leads to $d\in\big(\partial\psi(\widehat
{x})\big)^{-}.$ The continuity assumption [1] guarantees that $\widehat{x}%
\in\operatorname*{int}\operatorname{dom}\psi=\mathbb{R}^{n},$ so that
$\partial\psi\left(  \widehat{x}\right)  $ is compact.\ Hence, by
\cite[Theorem 23.4]{ROC},
\[
\psi^{\prime}(\widehat{x};d)=\max\left\{  u^{\prime}d\mid u\in\partial
\psi\left(  \widehat{x}\right)  \right\}  <0.
\]
\newline[(MFCQ $\wedge$ PLVCQ) $\Longrightarrow$ ACQ]: By the latter proof, we
can take $d\in G^{-}(\widehat{x})$ such that $\psi^{\prime}(\widehat{x};d)<0.$
Then, there exists a scalar $\delta>0$ such that $\psi(\widehat{x}+\beta
d)<\psi(\widehat{x})\leq0,$ for all $\beta\in(0,\delta].$ Therefore, we have
$\widehat{x}+\beta d\in S$ for all $\beta\in(0,\delta]$ $,$ which implies
$d\in D(S,\widehat{x}).$ We have thus proved the inclusion $G^{-}(\widehat
{x})\subseteq D(S,\widehat{x}).$ Hence, we get
\[
G^{0}(\widehat{x})=\overline{G^{-}(\widehat{x})}\subseteq\overline
{D(S,\widehat{x})}=C(S,\widehat{x}).
\]
\newline[COCQ $\Longrightarrow$ KTCQ]: Since $\psi^{\prime}(\widehat
{x};\widehat{d})<0$, by the same argument as in the proof of [(MFCQ $\wedge$
PLVCQ) $\Longrightarrow$ ACQ], we obtain a $\delta>0$ such that $\widehat
{x}+\delta\widehat{d}\in S.$ Thus, recalling the definition of
subdifferential, for each $t\in T(\widehat{x})$ and $\xi\in\partial
g_{t}(\widehat{x})$, we have
\[
\xi^{\prime}(\delta\widehat{d})=\xi^{\prime}(\widehat{x}+\delta\widehat
{d}-\widehat{x})\leq g_{t}(\widehat{x}+\delta\widehat{d})-g_{t}(\widehat
{x})=g_{t}(\widehat{x}+\delta\widehat{d})\leq0.
\]
This means that $\{d\in\mathbb{R}^{n}\mid\psi^{\prime}(\widehat{x}%
;d)<0\}\subseteq G^{0}(\widehat{x})$. This, together with the continuity of
$\psi^{\prime}(\widehat{x};.),$ implies that
\[
\{d\in\mathbb{R}^{n}\mid\psi^{\prime}(\widehat{x};d)\leq0\}=\overline
{\{d\in\mathbb{R}^{n}\mid\psi^{\prime}(\widehat{x};d)<0\}}\subseteq
\overline{G^{0}(\widehat{x})}=G^{0}(\widehat{x}).
\]
The conclusion is immediate from the last inclusion and the definition of
KTCQ.\newline[ACQ $\Longrightarrow$ EADQ] When $p=1$,\ the ACQ implies that%
\[
F^{0}(\widehat{x})\cap G^{0}(\widehat{x})\subseteq G^{0}(\widehat{x})\subseteq
C\left(  S,\widehat{x}\right)  =C(Q^{1}(\widehat{x}),\widehat{x}).
\]
[EADQ $\Longrightarrow$ WADQ] In fact,%
\[
F^{-}(\widehat{x})\cap G^{0}(\widehat{x})\subseteq F^{0}(\widehat{x})\cap
G^{0}(\widehat{x})\subseteq\bigcap_{i=1}^{p}C(Q^{i}(\widehat{x}),\widehat
{x})\subseteq C\left(  S,\widehat{x}\right)  .
\]
The proof is complete.\newline
\end{proof}

\begin{remark}
Notice that assumption [1] can be relaxed in some cases. For instance, in the
proofs of [(MFCQ $\wedge$ PLVCQ) $\Longrightarrow$ COCQ], [(MFCQ $\wedge$
PLVCQ) $\Longrightarrow$ ACQ], and [COCQ $\Longrightarrow$ KTCQ]\ we have used
the compactness of $\partial\psi\left(  \widehat{x}\right)  ,$ which follows
from [1], but also from $\widehat{x}\in\operatorname*{int}S$ (as
$S\subset\operatorname{dom}\psi$).
\end{remark}

The next , where we do not specify the objective function $f,$ shows the
necessity of the additional hypothesis [2] in the four implications of Diagram
1 where it is assumed.

\begin{example}
\label{Example3}Take $n=1,T=\left[  1,2\right]  ,$ $\widehat{x}=0,$ and%
\[
g_{t}(x)=\left\{
\begin{array}
[c]{ll}%
-\sqrt{2tx-x^{2}}, & \text{if }x\in\left[  0,2t\right]  ,\\
+\infty,\, & \text{\ otherwise.}%
\end{array}
\right.
\]
The function $\left(  t,x\right)  \longmapsto g_{t}(x)$ is continuous on
$T\times S=\left[  1,2\right]  \times\left[  0,2\right]  \,\ $but not on
$T\times\mathbb{R}^{n},$ so that [1] fails. One has $T(\widehat{x})=T,$
$\psi=g_{1},$ $C\left(  S,\widehat{x}\right)  =\mathbb{R}_{+},$ $\psi^{\prime
}(\widehat{x};d)=-\infty$ for all $d>0,$ $\partial g_{t}(\widehat
{x})=\emptyset$ for all $t\in T,\ G(\widehat{x})=\emptyset$ (i.e., [2] fails
despite of $\widehat{x}\in\operatorname*{bd}S$), and $G_{\ast}(\widehat
{x})=\left\{  0\right\}  .$ Moreover, $1$ is strong Slater point (with slack
$1$) and, taking an arbitrary $x_{\ast}\in\mathbb{R}^{n},$ we get $\left\{
\xi^{\prime}x_{\ast}\mid\xi\in%
{\displaystyle\bigcup\limits_{t\in T_{\varepsilon}(\widehat{x})}}
\partial g_{t}(\widehat{x})\right\}  =\emptyset$ for all $\varepsilon>0,$ so
that
\[
\inf_{\varepsilon>0}\sup\left\{  \xi^{\prime}x_{\ast}\mid\xi\in%
{\displaystyle\bigcup\limits_{t\in T_{\varepsilon}(\widehat{x})}}
\partial g_{t}(\widehat{x})\right\}  =-\infty<0.
\]
Thus, SSCQ, SCQ, COCQ, KTCQ, PMFCQ, and PLVCQ hold while MFCQ, LFMCQ and ACQ
fail. Then, [PMFCQ$\Longrightarrow$ MFCQ], [SCQ $\Longrightarrow$ MFCQ], [SCQ
$\Longrightarrow$ (MFCQ $\wedge$ PLVCQ)], and [(KTCQ$\ \wedge$ PLVCQ)
$\Longrightarrow$ ACQ] fail.
\end{example}

Theorem \ref{ThCQs}\ shows roughly speaking, first, that CCCQ and PLVCQ play
subsidiary roles with respect to other CQs, second, that SSCQ is the strongest
data qualification among those that are included in Diagram 1 and, third, that
LFMCQ, KTCQ and WADQ are the weakest.

\begin{example}
\label{Example2}Consider the MOSIP problem in $\mathbb{R}^{2}$
\[%
\begin{array}
[c]{lll}%
\left(  P\right)  & \text{minimize} & f\left(  x\right)  =\left(
-x_{1},-x_{1}\right) \\
& \text{subject to } & g_{t}(x)\leq0,t\in T:=\mathbb{N}\cup\{0\},
\end{array}
\]
where $f_{1}(x)=f_{2}(x):=-x_{1},$ and $g_{t}(x):=\sup\big\{x^{\prime}y\mid
y\in X_{t}\big\}$ is the \emph{support function} of the compact convex set
$X_{t}:=\big\{x\in\mathbb{R}_{+}^{2}\mid x_{1}^{2}+x_{2}^{2}-2(1+t)x_{2}%
\leq0\},$ so that $g_{t}\in\Gamma\left(  \mathbb{R}^{n}\right)  $ for all
$t\in T.$ Since $\left\{  X_{t}\right\}  _{t=1}^{\infty}$ is an expansive
sequence of sets, $\left\{  g_{t}\right\}  _{t=1}^{\infty}$\ is a
non-decreasing sequence of nonnegative functions. Moreover, given $t\in
T,$\ by a well-known property of the support functions, $\partial
g_{t}(\widehat{x})=X_{t},$ while elementary calculus yields the explicit
expression%
\[
g_{t}\left(  x\right)  =\left\{
\begin{array}
[c]{ll}%
2(1+t)\max\left\{  x_{2},0\right\}  , & \text{if }x_{1}\leq0,\\
\left\Vert x\right\Vert +(1+t)x_{2}, & \text{else,}%
\end{array}
\right.
\]
for the constraint functions. Thus, $\psi\left(  x\right)  =0$ when
$x\in-\mathbb{R}_{+}^{2}$ and $\psi\left(  x\right)  =+\infty$ otherwise, so
that $S=-\mathbb{R}_{+}^{2}.$\newline We now take $\widehat{x}=0_{2}.$ Then,
$\psi^{\prime}\left(  \widehat{x};d\right)  =0,$ if $d\in-\mathbb{R}_{+}^{2},$
and $\psi^{\prime}\left(  \widehat{x};d\right)  =+\infty,$ otherwise while
$\partial\psi\left(  \widehat{x}\right)  =\mathbb{R}_{+}^{2}.$
Moreover,\ $C(S,\widehat{x})=-\mathbb{R}_{+}^{2},$ $N\left(  S,\widehat
{x}\right)  =\mathbb{R}_{+}^{2},$ $T(\widehat{x})=T,$
\[
G_{\ast}(\widehat{x})=G(\widehat{x})=\big\{x\in\mathbb{R}^{2}\mid x_{1}%
\geq0,\ x_{2}>0\big\}\cup\{0_{2}\},
\]
$G^{0}(\widehat{x})=-\mathbb{R}_{+}^{2},$ $G^{-}(\widehat{x})=\emptyset,$
$F(\widehat{x})=\{(-1,0)\},$ $F^{0}(\widehat{x})\cap G^{0}(\widehat
{x})=\left\{  0\right\}  \times\left(  -\mathbb{R}_{+}\right)  ,$
$C(Q^{1}(\widehat{x}),\widehat{x})\cap C(Q^{2}(\widehat{x}),\widehat
{x})=\left\{  0\right\}  \times\left(  -\mathbb{R}_{+}\right)  ,$
$\bigcup_{i=i}^{p}\big(\partial f_{i}(\widehat{x})\big)^{-}=\mathbb{R}%
_{++}\times\mathbb{R},$ and $%
{\displaystyle\bigcup\limits_{t\in T_{\varepsilon}(\widehat{x})}}
\partial g_{t}(\widehat{x})=\left(  \mathbb{R\times R}_{++}\right)
\cup\left\{  0_{2}\right\}  $ for all $\varepsilon>0.$ Therefore, KTCQ, ACQ,
WADQ, and EADQ hold at $\widehat{x}$ while the remaining data qualifications
in Theorem \ref{ThCQs} fail (as well as \textit{MOQ)}. Observe also that
$\operatorname*{int}S\neq\emptyset=\left\{  x\in\mathbb{R}^{n}:\psi\left(
x\right)  <0\right\}  ,$ so that SSCQ is essential for the validity of
(\ref{2.2}).
\end{example}

\section{Optimality conditions for weak efficiency}

Recall that we consider a problem $(P)$ as in (\ref{P}) with convex and
finite-valued objective functions $f_{i},$ $i\in I,$ and constraint functions
$g_{t}\in\Gamma\left(  \mathbb{R}^{n}\right)  ,$ $t\in T.$

We say that the \emph{weak KKT condition} holds at $\widehat{x}\in S$ when
there exist $\alpha_{i}\geq0$ for $i\in I$ with $\sum_{i=1}^{p}\alpha_{i}=1$,
and $\beta_{t}\geq0$ for $t\in T(\widehat{x})$, with $\beta_{t}\neq0$ for
finitely many indexes, such that
\begin{equation}
0_{n}\in\sum_{i=1}^{p}\alpha_{i}\partial f_{i}(\widehat{x})+\sum_{t\in
T(\widehat{x})}\beta_{t}\partial g_{t}(\widehat{x}).\label{***}%
\end{equation}
In geometric terms, the weak KKT condition holds at $\widehat{x}\in S$ if and
only if $0_{n}\in F_{\ast}(\widehat{x})+G_{\ast}(\widehat{x}).$

\begin{theorem}
[Weak KKT necessary condition under WACQ and CCCQ]\label{3.2}Let $\widehat{x}$
be a weak efficient solution of problem $(P).$ Then:\newline$(i)$ If WACQ
holds at $\widehat{x},$ one has
\begin{equation}
0_{n}\in F_{\ast}(\widehat{x})+\overline{G_{\ast}(\widehat{x})}.\label{**}%
\end{equation}
\newline$(ii)$ If, in addition, CCCQ holds at $\widehat{x},$ then $(P)$
satisfies the weak KKT condition at $\widehat{x}.$
\end{theorem}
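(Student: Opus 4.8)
The plan is to reduce the MOSIP problem $(P)$ to an ordinary multiobjective problem and then apply a known subdifferential-sum formula together with the WACQ/CCCQ machinery.

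\medskip

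First, since $\widehat{x}$ is a weak efficient solution of $(P)$, there is no $x\in S$ with $f(x)<f(\widehat{x})$. I would translate this into a statement about feasible directions: for every $i\in I$ there is no $d$ belonging to $F^{-}(\widehat{x})\cap C(S,\widehat{x})$ (roughly, a direction along which all objectives strictly decrease while staying feasible). Invoking WACQ at $\widehat{x}$, namely $G(\widehat{x})\neq\emptyset$ and $F^{-}(\widehat{x})\cap G^{0}(\widehat{x})\subseteq C(S,\widehat{x})$, one gets that the system ``$\xi^{\prime}d<0$ for all $\xi\in F(\widehat{x})$, and $\eta^{\prime}d\le0$ for all $\eta\in G(\widehat{x})$'' is inconsistent, i.e. $F^{-}(\widehat{x})\cap G^{0}(\widehat{x})=\emptyset$. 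Here one must be careful to phrase the strict-decrease condition using $F^{-}(\widehat{x})=\big(\operatorname{conv}F(\widehat{x})\big)^{-}$, which is legitimate because each $f_i$ is finite-valued convex, hence $\partial f_i(\widehat{x})$ is nonempty and compact, so $f_i^{\prime}(\widehat{x};d)=\max\{\xi^{\prime}d\mid\xi\in\partial f_i(\widehat{x})\}$.

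\medskip

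Second, I would convert the inconsistency of that homogeneous system into a statement that $0_n$ lies in a suitable closed convex cone, via a Gordan-type alternative / the Farkas lemma for semi-infinite systems cited in the paper (Corollary 3.1.3 of \cite{GorLop}). Concretely, $F^{-}(\widehat{x})\cap G^{0}(\widehat{x})=\emptyset$ forces $0_n\in \operatorname{conv}\big(F(\widehat{x})\big)+\overline{\operatorname{cone}}\big(G(\widehat{x})\big)=F_{\ast}(\widehat{x})+\overline{G_{\ast}(\widehat{x})}$, which is exactly (\ref{**}); the normalization $\sum_i\alpha_i=1$ comes for free from working with $\operatorname{conv}F(\widehat{x})$ rather than $\operatorname{cone}F(\widehat{x})$ (one needs that $0_n\notin\operatorname{conv}F(\widehat{x})$ can be excluded or handled separately — if $0_n\in F_{\ast}(\widehat{x})$ the conclusion is trivial). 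This proves part $(i)$.

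\medskip

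For part $(ii)$, assuming additionally CCCQ at $\widehat{x}$ means $G_{\ast}(\widehat{x})$ is closed, so $\overline{G_{\ast}(\widehat{x})}=G_{\ast}(\widehat{x})$ and (\ref{**}) becomes $0_n\in F_{\ast}(\widehat{x})+G_{\ast}(\widehat{x})$, which by Carathéodory's theorem applied inside $G_{\ast}(\widehat{x})=\operatorname{cone}\big(\bigcup_{t\in T(\widehat{x})}\partial g_t(\widehat{x})\big)$ yields a representation with only finitely many nonzero multipliers $\beta_t$; together with the $\alpha_i$ from $F_{\ast}(\widehat{x})$ this is precisely the weak KKT condition (\ref{***}), in its geometric form $0_n\in F_{\ast}(\widehat{x})+G_{\ast}(\widehat{x})$. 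I expect the main obstacle to be the first step: carefully justifying that weak efficiency of $\widehat{x}$ yields emptiness of $F^{-}(\widehat{x})\cap C(S,\widehat{x})$ and then using WACQ to pass to $F^{-}(\widehat{x})\cap G^{0}(\widehat{x})=\emptyset$ — the delicate point is that a strictly-decreasing direction for the convex functions $f_i$ at $\widehat{x}$ need not immediately produce a nearby feasible point with strictly smaller objective values unless one exploits convexity of both the $f_i$ and of $S$, and handles the case $G(\widehat{x})=\emptyset$ (where WACQ is vacuously excluded and $\widehat{x}\in\operatorname{int}S$ forces $0_n\in F_{\ast}(\widehat{x})$ directly). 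The closed-cone / Carathéodory bookkeeping in parts $(i)$ and $(ii)$ is then routine.
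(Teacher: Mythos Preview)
Your proposal is correct and follows essentially the same route as the paper: first show that weak efficiency of $\widehat{x}$ forces $F^{-}(\widehat{x})\cap C(S,\widehat{x})=\emptyset$ (the paper does this by first working on $D(S,\widehat{x})$ and then passing to the closure via continuity of $d\mapsto\max_i f_i'(\widehat{x};d)$), then invoke the WADQ inclusion to obtain $F^{-}(\widehat{x})\cap G^{0}(\widehat{x})=\emptyset$, and finally convert this into $0_n\in F_{\ast}(\widehat{x})+\overline{G_{\ast}(\widehat{x})}$. The only cosmetic difference is that the paper phrases the last step as a strong separation argument (Rockafellar, Corollary~11.4.1, using that $F_{\ast}(\widehat{x})$ is compact convex and $-\overline{G_{\ast}(\widehat{x})}$ is a closed convex cone) rather than citing a Gordan/Farkas alternative, but the two are equivalent here; part~$(ii)$ is handled identically.
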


\begin{proof}
$(i)$ We first claim that
\begin{equation}
\max_{i\in I}f_{i}^{\prime}(\widehat{x};d)\geq0,\ \ \forall d\in
D(S,\widehat{x}).\label{111}%
\end{equation}
On the contrary, suppose that there exists a $d\in D(S,\widehat{x})$ such that
$f_{i}^{\prime}(\widehat{x};d)<0$ for all $i\in I$. Thus, there exist positive
scalars $\delta,\delta_{1},,,,,\delta_{p}$ such that
\begin{equation}
\left\{
\begin{array}
[c]{ll}%
\widehat{x}+\varepsilon d\in S, & \forall\varepsilon\in(0,\delta),\\
f_{i}(\widehat{x}+\varepsilon d)-f_{i}(\widehat{x})<0, & \forall\varepsilon
\in(0,\delta_{i}).
\end{array}
\right. \label{3.1}%
\end{equation}
Take $\widehat{\delta}:=\min\{\delta,\delta_{1},,,,,\delta_{p}\}$. From
(\ref{3.1}), for each $\varepsilon\in(0,\widehat{\delta})$ we get
$f(\widehat{x}+\varepsilon d)<f(\widehat{x})$ and $\widehat{x}+\varepsilon
d\in S,$ which contradicts the weak efficiency of $\widehat{x}$. Thus,
(\ref{111}) is true.\newline\newline We now show that (\ref{111}) also holds
for $d\in C(S,\widehat{x})=\overline{D(S,\widehat{x})}.$ Indeed, if
$d\in\overline{D(S,\widehat{x})}$, there exists a sequence $\{d_{k}%
\}_{k=1}^{\infty}$ in $D(S,\widehat{x})$ converging to $d$. For each $i\in I,$
since $f_{i}$ is a finite convex function, its directional derivative function
$f_{i}^{\prime}(\widehat{x};\cdot)$\ at $\widehat{x}$ is finite sublinear
\cite[Proposition 1.1.2]{HirLem} and, so, convex and continuous. Thus,
$\varphi(\cdot):=\max_{i\in I}f_{i}^{\prime}(\widehat{x};\cdot)$ is a convex
continuous function too. From (\ref{111}) and the continuity of $\varphi$ we
deduce that $\varphi(d)=\lim_{k\rightarrow\infty}\varphi(d_{k})\geq0,$ so that
$\varphi(d)\geq0$ for all $d\in C\left(  S,\widehat{x}\right)  $. From this
inequality and the ACQ at $\widehat{x}$, we obtain that
\begin{equation}
\varphi(d)\geq0,\text{ }\forall d\in F^{-}(\hat{x})\cap G^{0}(\widehat
{x}).\label{3.3}%
\end{equation}
\newline\newline We claim that $F^{-}(\hat{x})\cap G^{0}(\hat{x})=\emptyset$.
Otherwise, if $\hat{d}\in F^{-}(\hat{x})\cap G^{0}(\hat{x})$, then
$f_{i}^{\prime}(\hat{x};\hat{d})<0$ for all $i\in I$ (by definition of
$F^{-}(\hat{x})$), so that $\varphi(\hat{d})<0$, which contradicts
(\ref{3.3}). Thus our claim is proved. Since $F^{-}(\hat{x})=F_{\ast}^{-}%
(\hat{x})$ and $G^{0}(\hat{x})=\big(\overline{G_{\ast}(\hat{x})}\big)^{0}$,
then $F_{\ast}^{-}(\hat{x})\cap\big(\overline{G_{\ast}(\hat{x})}%
\big)^{0}=\emptyset$. Hence there is no vector $v\in\mathbb{R}^{n}$
satisfying
\[
\left\{
\begin{array}
[c]{ll}%
v^{\prime}y<0,\ \forall y\in F_{\ast}(\hat{x}), & \\
v^{\prime}y\geq0,\ \ \forall y\in\big(-\overline{G_{\ast}(\hat{x})}\big). &
\end{array}
\right.
\]
Since $F_{\ast}(\hat{x})$ is a non-empty convex set and $\big(-\overline
{G_{\ast}(\hat{x})}\big)$ is a closed convex cone, by the strong separation
theorem (see, e.g., \cite[Corollary 11.4.1]{ROC} we get $F_{\ast}(\hat{x}%
)\cap\big(-\overline{G_{\ast}(\hat{x})}\big)\neq\emptyset.$ This means that
\[
0_{n}\in F_{\ast}(\hat{x})\cap\overline{G_{\ast}(\hat{x})}\big).
\]
\newline$(ii)$ Under the closedness assumption, $0_{n}\in F_{\ast}(\widehat
{x})+G_{\ast}(\widehat{x})$ and the conclusion follows. \newline
\end{proof}

\begin{theorem}
[Weak KKT sufficient condition]\label{qwe}\textbf{\ } If the weak KKT
condition holds at $\widehat{x}\in S,$ then, $\widehat{x}$ is a weak efficient
solution of $(P)$.
\end{theorem}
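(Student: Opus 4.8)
The plan is to argue by contradiction, exploiting convexity throughout. Suppose the weak KKT condition holds at $\widehat{x}$, so there exist $\alpha_i\geq0$ with $\sum_{i=1}^p\alpha_i=1$, and $\beta_t\geq0$ for $t\in T(\widehat{x})$ with only finitely many nonzero, together with subgradients $\xi_i\in\partial f_i(\widehat{x})$ and $\eta_t\in\partial g_t(\widehat{x})$, such that $0_n=\sum_{i=1}^p\alpha_i\xi_i+\sum_{t\in T(\widehat{x})}\beta_t\eta_t$. Assume, contrary to the claim, that $\widehat{x}$ is \emph{not} a weak efficient solution of $(P)$: then there is some $x\in S$ with $f_i(x)<f_i(\widehat{x})$ for every $i\in I$.

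The key step is to evaluate the subgradient inequalities at this $x$. For each $i$, convexity of $f_i$ gives $\xi_i^{\prime}(x-\widehat{x})\leq f_i(x)-f_i(\widehat{x})<0$. For each $t\in T(\widehat{x})$ with $\beta_t>0$, convexity of $g_t$ gives $\eta_t^{\prime}(x-\widehat{x})\leq g_t(x)-g_t(\widehat{x})=g_t(x)-0\leq0$, since $x\in S$ forces $g_t(x)\leq0$ and $t$ active means $g_t(\widehat{x})=0$. Now take the scalar product of the KKT identity with $(x-\widehat{x})$:
\[
0=\sum_{i=1}^p\alpha_i\,\xi_i^{\prime}(x-\widehat{x})+\sum_{t\in T(\widehat{x})}\beta_t\,\eta_t^{\prime}(x-\widehat{x}).
\]
The second sum is $\leq0$, while the first sum is strictly negative because $\sum_i\alpha_i=1>0$ forces at least one $\alpha_i>0$, and for that index $\alpha_i\xi_i^{\prime}(x-\widehat{x})<0$ while all other terms are $\leq0$. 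Hence the right-hand side is strictly negative, contradicting the equality with $0$. Therefore no such $x$ exists and $\widehat{x}$ is a weak efficient solution.

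I expect no serious obstacle here; this is the standard convex sufficiency argument. The only point requiring a little care is ensuring the first sum is \emph{strictly} negative rather than merely $\leq0$: one must use that $\sum_i\alpha_i=1$ guarantees some $\alpha_i>0$, and then that the corresponding strict inequality $f_i(x)<f_i(\widehat{x})$ (valid for \emph{all} $i$, by failure of weak efficiency) propagates through the subgradient inequality. Note also that the geometric reformulation stated just before the theorem—namely that weak KKT at $\widehat{x}$ is equivalent to $0_n\in F_{\ast}(\widehat{x})+G_{\ast}(\widehat{x})$—could be used to phrase the proof slightly more compactly, writing $0_n=u+v$ with $u\in F_{\ast}(\widehat{x})$, $v\in G_{\ast}(\widehat{x})$, but the elementary multiplier form above makes the sign bookkeeping most transparent.
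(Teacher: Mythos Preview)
Your proof is correct and follows essentially the same approach as the paper's own proof: both argue by contradiction, assume a point $x\in S$ with $f_i(x)<f_i(\widehat{x})$ for all $i$, apply the subgradient inequalities for the $f_i$ and the active $g_t$, and derive a sign contradiction from the KKT identity. The only cosmetic difference is that the paper rearranges to show $\sum_{t}\beta_t\zeta_t'(x-\widehat{x})>0$ and then observes it is also $\leq 0$, whereas you keep all terms on one side and conclude $0<0$; the substance is identical.
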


\begin{proof}
Let $\alpha_{i}\geq0,$ $i\in I,$ with $\sum_{i=1}^{p}\alpha_{i}=1$, and some
$\beta_{t}\geq0,$ $t\in T(\widehat{x})$, with $\beta_{t}\neq0$ for finitely
many indexes.Due to (\ref{***}), we can find some $\xi_{i}\in\partial
f_{i}(\widehat{x})$ and $\zeta_{t}\in\partial g_{t}(\widehat{x})$ for
$(i,t)\in I\times T(\widehat{x})$ such that
\begin{equation}
\sum_{i=1}^{p}\alpha_{i}\xi_{i}+\sum_{t\in T^{\ast}}\beta{_{t}}\zeta_{t}%
=0_{n},\label{bbb1}%
\end{equation}
where $T^{\ast}:=\{t\in T(\widehat{x})\mid\beta{_{t}}\neq0\}$. Suppose on the
contrary that $\widehat{x}$ is not a weak efficient solution for $(P)$. Then
there exists a feasible point $x_{\ast}$ for $(P)$ such that $f_{i}(x_{\ast
})<f_{i}(\widehat{x})$ for all $i\in I$. Thus, $\xi_{i}^{\prime}(x_{\ast
}-\widehat{x})<0$ for all $i\in I$. Due to the last inequality, it follows
from (\ref{bbb1}) that
\begin{equation}
\sum_{t\in T^{\ast}}\beta{_{t}}\zeta_{t}^{\prime}(x_{\ast}-\widehat{x}%
)=-\sum_{i=1}^{p}\alpha_{i}\xi_{i}^{\prime}(x_{\ast}-\widehat{x})>0.\label{b1}%
\end{equation}
On the other hand, since $T^{\ast}\subseteq T(\widehat{x})$, we obtain that
\[
\sum_{t\in T^{\ast}}\beta{_{t}}\zeta_{t}^{\prime}(x_{\ast}-\widehat{x})\leq0.
\]
This contradicts (\ref{b1}).
\end{proof}

Combining Theorems \ref{ThCQs}, \ref{3.2} and \ref{qwe} we get the following
characterization of the weak efficient solution for (possibly non-continuous)
convex MOSIP problems.

\begin{corollary}
[Characterization under LFMCQ via weak KKT condition]\label{k4}Suppose that
$(P)$ satisfies LFMCQ at $\widehat{x}\in S.$ Then, $\widehat{x}$\ is a weak
efficient solution for $(P)$ if and only if $(P)$ satisfies the weak KKT
condition at $\widehat{x}.$
\end{corollary}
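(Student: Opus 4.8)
The plan is to derive the corollary by combining the sufficiency result of Theorem~\ref{qwe}, the necessity result of Theorem~\ref{3.2}, and the implications between data qualifications recorded in Diagram~1 of Theorem~\ref{ThCQs}.

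The ``if'' part needs no qualification at all: if the weak KKT condition holds at $\widehat{x}$, then Theorem~\ref{qwe} gives directly that $\widehat{x}$ is a weak efficient solution of $(P)$.

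For the ``only if'' part, assume that $\widehat{x}$ is a weak efficient solution and that LFMCQ holds at $\widehat{x}$. The key step is to chain implications from Theorem~\ref{ThCQs}: LFMCQ implies both CCCQ and ACQ at $\widehat{x}$ (both implications appear in the proof of Theorem~\ref{ThCQs}, ACQ via the equivalence LFMCQ $\leftrightarrow$ (ACQ $\wedge$ CCCQ)), and ACQ implies WADQ at $\widehat{x}$ (the unlabelled arrow of Diagram~1, which is immediate from the inclusions $F^{-}(\widehat{x})\cap G^{0}(\widehat{x})\subseteq G^{0}(\widehat{x})\subseteq C(S,\widehat{x})$). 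Therefore WADQ and CCCQ both hold at $\widehat{x}$, and Theorem~\ref{3.2}$(ii)$ yields the weak KKT condition at $\widehat{x}$.

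The only delicate point is the degenerate case $G(\widehat{x})=\emptyset$, in which WADQ and ACQ fail by definition and the chain above does not apply. But in that case LFMCQ reads $N(S,\widehat{x})=G_{\ast}(\widehat{x})=\{0_{n}\}$, so $\widehat{x}\in\operatorname*{int}S$ and hence $D(S,\widehat{x})=\mathbb{R}^{n}$. Running the first part of the proof of Theorem~\ref{3.2}$(i)$ then gives $\max_{i\in I}f_{i}^{\prime}(\widehat{x};d)\geq0$ for every $d\in\mathbb{R}^{n}$; since each $\partial f_{i}(\widehat{x})$ is nonempty and compact this forces $F^{-}(\widehat{x})=\emptyset$, and the strict separation argument of that same proof (with $\overline{G_{\ast}(\widehat{x})}=\{0_{n}\}$) gives $0_{n}\in F_{\ast}(\widehat{x})=F_{\ast}(\widehat{x})+G_{\ast}(\widehat{x})$, i.e.\ the weak KKT condition. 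So the equivalence holds in all cases, and I expect this degenerate case, rather than the main chain, to be the only point requiring real care.
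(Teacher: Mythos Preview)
Your proposal is correct and follows exactly the approach indicated by the paper, which simply states that the corollary comes from ``combining Theorems~\ref{ThCQs}, \ref{3.2} and \ref{qwe}'' without spelling out the chain LFMCQ $\Rightarrow$ (ACQ $\wedge$ CCCQ) $\Rightarrow$ (WADQ $\wedge$ CCCQ). Your additional treatment of the degenerate case $G(\widehat{x})=\emptyset$ (where ACQ and WADQ fail by definition even though LFMCQ may hold) is a genuine refinement that the paper glosses over; your argument there is sound, since $N(S,\widehat{x})=\{0_n\}$ forces $\widehat{x}\in\operatorname{int}S$ and the compactness of $F_{\ast}(\widehat{x})$ together with $F^{-}(\widehat{x})=\emptyset$ yields $0_n\in F_{\ast}(\widehat{x})$ directly.
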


A similar result was proved, for continuous convex MOSIP problems, in
\cite[Theorem 27]{GOVATO2} under the MFCQ and in \cite[Theorem 29]{GOVATO2}
under the KTCQ and the CCCQ, and the assumption that $0_{n}\notin F_{\ast
}\left(  \widehat{x}\right)  .$

We now exploit the gap function associated with $(P)$ to characterize its weak
efficient solutions.

\begin{theorem}
[Characterization under LFMCQ via gap function]\label{gap1}Let $\widehat{x}\in
S.$ The following statements hold: \newline$(i)$ If $\vartheta(\widehat{x}%
,\xi,\lambda)=0$ for some $\xi:=(\xi_{1},\ldots,\xi_{p})\in\prod_{i=1}%
^{p}\Big(\partial f_{i}\left(  \widehat{x}\right)  \Big)$ and $\lambda
\geq0_{p}$, then $\widehat{x}$ is a weak efficient solution for $(P).$%
\newline$(ii)$ If $\widehat{x}$ is a weak efficient solution for $(P)$ where
the LFMCQ holds, then there exist $\xi\in\prod_{i=1}^{p}\partial
f_{i}(\widehat{x})$ and $\lambda\geq0_{p}$ such that $\vartheta(\widehat
{x},\xi,\lambda)=0.$\newline
\end{theorem}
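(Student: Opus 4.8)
The plan is to derive both implications from the subgradient inequality, invoking Corollary~\ref{k4} to supply the multipliers in part $(ii)$. The key preliminary remark, used in both parts, is that $\vartheta(\widehat{x},\xi,\lambda)\ge 0$ for every admissible pair $(\xi,\lambda)$, since taking $y=\widehat{x}\in S$ in the supremum defining $\vartheta$ gives value $0$. Consequently the hypothesis $\vartheta(\widehat{x},\xi,\lambda)=0$ in $(i)$ is equivalent to the assertion that $\sum_{i=1}^{p}\lambda_i\xi_i'(\widehat{x}-y)\le 0$ for all $y\in S$, and proving $(ii)$ reduces to exhibiting $\xi$ and $\lambda$ making this same inequality hold.

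For part $(i)$ I would argue by contradiction. If $\widehat{x}$ fails to be weakly efficient, pick $x_*\in S$ with $f_i(x_*)<f_i(\widehat{x})$ for every $i\in I$; since $\xi_i\in\partial f_i(\widehat{x})$, the subgradient inequality gives $\xi_i'(x_*-\widehat{x})\le f_i(x_*)-f_i(\widehat{x})<0$ for all $i$. Because $\lambda\ge 0_p$ is nonzero, some $\lambda_i$ is strictly positive, whence $\sum_{i=1}^{p}\lambda_i\xi_i'(\widehat{x}-x_*)>0$, contradicting the reformulated hypothesis evaluated at $y=x_*$.

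For part $(ii)$, since $\widehat{x}$ is weakly efficient and LFMCQ holds at $\widehat{x}$, Corollary~\ref{k4} yields the weak KKT condition: there are $\alpha_i\ge 0$ with $\sum_{i=1}^{p}\alpha_i=1$, finitely supported scalars $\beta_t\ge 0$ $(t\in T(\widehat{x}))$, and vectors $\xi_i\in\partial f_i(\widehat{x})$, $\zeta_t\in\partial g_t(\widehat{x})$, with $\sum_{i=1}^{p}\alpha_i\xi_i+\sum_{t\in T(\widehat{x})}\beta_t\zeta_t=0_n$. Set $\lambda:=(\alpha_1,\dots,\alpha_p)$, which lies in $\Delta_+^p$ and in particular satisfies $\lambda\ge 0_p$. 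Then, for every $y\in S$, the KKT identity lets one rewrite $\sum_{i=1}^{p}\lambda_i\xi_i'(\widehat{x}-y)=\sum_{t\in T(\widehat{x})}\beta_t\,\zeta_t'(y-\widehat{x})$, and each term is nonpositive because $\zeta_t'(y-\widehat{x})\le g_t(y)-g_t(\widehat{x})=g_t(y)\le 0$ for $t\in T(\widehat{x})$ and $y\in S$. Hence $\vartheta(\widehat{x},\xi,\lambda)\le 0$, and combined with the preliminary remark this gives $\vartheta(\widehat{x},\xi,\lambda)=0$.

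I do not expect a genuine obstacle: both parts reduce to the subgradient inequality together with the sign of the active constraints on $S$, and Corollary~\ref{k4} does the real work in $(ii)$. The only points requiring a word of care are that the $\lambda$ produced in $(ii)$ is a bona fide nonzero nonnegative vector, which is automatic since it is the objective-multiplier vector of the weak KKT condition, and that in $(i)$ the strict inequality survives the convex combination precisely because $\lambda\ne 0_p$.
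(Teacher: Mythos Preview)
Your proposal is correct and follows essentially the same approach as the paper: both parts argue via the subgradient inequality, with $(i)$ by contradiction using $\lambda\ge 0_p$ (hence $\lambda\neq 0_p$ in the paper's notation) and $(ii)$ by invoking Corollary~\ref{k4} and showing that the active-constraint subgradients make $\sum_i\lambda_i\xi_i'(\widehat{x}-y)\le 0$ on $S$. Your explicit preliminary remark that $\vartheta(\widehat{x},\xi,\lambda)\ge 0$ by evaluating at $y=\widehat{x}$ is exactly what the paper uses at the end of its proof of $(ii)$.
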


\begin{proof}
$(i)$ Assume that $\vartheta(\widehat{x},\xi,\lambda)=0$, while $\widehat{x}$
is not a weak efficient solution for $(P)$. Then, there exists $x^{\ast}\in S$
such that
\[
f_{i}(x^{\ast})<f_{i}(\widehat{x}),\ \forall i\in I.\
\]
Then, by definition of subgradient,%
\[
\xi_{i}^{\prime}(x^{\ast}-\widehat{x})<0,\ \forall i\in I.
\]
Due to the latter inequalities and the assumption that $\lambda\geq0_{p}$, we
have $\sum_{i=1}^{p}\lambda_{i}\xi_{i}^{\prime}(\widehat{x}-x^{\ast})>0.$
Hence, $\vartheta(\widehat{x},\xi,\lambda)>0$, which contradicts the
assumption.\newline\newline$(ii)$ According to Corollary \ref{k4}, there exist
$\lambda:=\left(  \lambda_{1},,,,,\lambda_{p}\right)  \geq0_{p}$ with
$\sum_{i=1}^{p}\lambda_{i}=1$, a finite set $\left\{  t_{1},,,,,t_{q}\right\}
\subseteq T\left(  \widehat{x}\right)  ,$ with corresponding nonnegative
scalars $\mu_{t_{1}},,,,,\mu_{t_{q}},$ and subgradients $\xi_{i}\in\partial
f_{i}\left(  \widehat{x}\right)  $ for $i\in I$, and $\zeta_{t_{m}}\in\partial
g_{t_{m}}\left(  \widehat{x}\right)  $ for $m=1,,,,,q$, such that
\begin{equation}
\sum_{i=1}^{p}\lambda_{i}\xi_{i}+\sum_{m=1}^{q}\mu_{t_{m}}\zeta_{t_{m}}%
=0_{n}\label{330}%
\end{equation}
Take an arbitrary $y\in S$ and $m\in\left\{  1,...,q\right\}  .$ Since
$g_{t_{m}}\left(  y\right)  \leq0=g_{t_{m}}\left(  \widehat{x}\right)  ,$
$\zeta_{t_{m}}^{\prime}(y-\widehat{x})\leq0.$ This and (\ref{330}) imply
that:
\[
\sum_{i=1}^{p}\lambda_{i}\xi_{i}^{\prime}(y-\widehat{x})=-\sum_{m=1}^{q}%
\mu_{t_{m}}\zeta_{t_{m}}^{\prime}(y-\widehat{x})\geq0.
\]
Therefore, $\sum_{i=1}^{p}\lambda_{i}\xi_{i}^{\prime}(\widehat{x}-y)\leq0 $
for all $y\in S.$ From this and $\sum_{i=1}^{p}\lambda_{i}\xi_{i}^{\prime
}(\widehat{x}-\widehat{x})=0$, we conclude that
\[
\vartheta(x,\xi,\lambda)=\sup_{y\in S}\Big\{\sum_{i=1}^{p}\lambda_{i}\xi
_{i}^{\prime}(x-y)\Big\}=0.\newline%
\]

\end{proof}

Finally in this section, we consider $(P)$ as a particular instance of the
general vector optimization problem studied in \cite{DIGOLOLO}, just taking as
decision space $X=\mathbb{R}^{n},$ as objective space $Y=\mathbb{R}^{p},$ and
as constraint space $Z$ some linear subspace of $\mathbb{R}^{T}$ such that
$Z^{\bullet}:=Z\cup\{+\infty_{Z}\},$ where $+\infty_{Z}$ denotes a greatest
element, contains $\left\{  g\left(  x\right)  \mid x\in\mathbb{R}%
^{n}\right\}  ,$ where $g\left(  x\right)  :=\left(  g_{t}\left(  x\right)
\right)  _{t\in T}$ for all $x\in\mathbb{R}^{n}$, with the convention that
$g\left(  x\right)  =+\infty_{Z}$ when $g_{t}\left(  x\right)  =+\infty$ for
at least one $t\in T.$ The \emph{domain} of $g$ is
\[
\operatorname{dom}g:=\{x\in\mathbb{R}^{n}\mid g\left(  x\right)  \neq
+\infty_{Z}\}=%
{\displaystyle\bigcap\limits_{t\in T}}
\operatorname{dom}g_{t}\neq\emptyset.
\]
We assume that $Z$ is equipped with a locally convex topology finer than the
one of the pointwise convergence, with positive cone $Z\cap\mathbb{R}_{+}%
^{T}.$ We consider on the dual space of $Z,$ $Z^{\ast},$ the ordering induced
by the positive cone $Z\cap\mathbb{R}_{+}^{T},$ i.e., the positive cone in
$Z^{\ast}$ is $Z_{+}^{\ast}:=\{z^{\ast}\in Z^{\ast}\mid\left\langle z^{\ast
},z\right\rangle \geq0,\forall z\in Z\cap\mathbb{R}_{+}^{T}\}.$ Given
$z^{\ast}\in Z^{\ast},$\ we define $\left(  z^{\ast}\circ g\right)  \left(
x\right)  =+\infty$ whenever $g\left(  x\right)  =+\infty_{Z}.$

The \emph{conjugate} of a vector function $h:\mathbb{R}^{n}\longrightarrow
\mathbb{R}^{p}\cup\{+\infty_{\mathbb{R}^{p}}\},$ where $+\infty_{\mathbb{R}%
^{p}}$ denotes an element greater than any other in $\mathbb{R}^{p},$\ is the
set-valued map $h^{\ast}\colon\mathcal{M}_{p\times n}\rightrightarrows
\mathbb{R}^{p}\cup\{+\infty_{\mathbb{R}^{p}}\}$ defined by
\[
h^{\ast}(M):=\operatorname*{WSup}\{Mx-h(x)\mid x\in\mathbb{R}^{n}\},\text{
}\forall M\in\mathcal{M}_{p\times n},
\]
where $\operatorname*{WSup}V\ $represents the weak supremum of $V\subset
\mathbb{R}^{p}$ in Tanino's sense \cite{Tan92}. Due to the assumptions on the
data of $\left(  P\right)  ,$\ the next lemma is a straightforward consequence
of \cite[Theorem 5.1]{DIGOLOLO}.

\begin{lemma}
\label{OCA1}Let $\widehat{x}\in S.$ Then the following statements are
equivalent:\newline$(i)$ The set
\begin{equation}
\bigcup\limits_{\left(  z_{1}^{\ast},...,z_{p}^{\ast}\right)  \in\left(
Z_{+}^{\ast}\right)  ^{p}}\left\{  (M,y)\in\mathcal{M}_{p\times n}%
\times\mathbb{R}^{p}\mid y\in(f+\left(  z_{1}^{\ast},...,z_{p}^{\ast}\right)
\circ g)^{\ast}(M)+\mathbb{R}_{+}^{p}\right\} \label{closed set}%
\end{equation}
is closed regarding $(0_{p\times n},-f(\widehat{x}))$.\newline$\left(
ii\right)  $ $\widehat{x}$ is a weak efficient solution of $\left(  P\right)
$ if and only if there exist $z_{1}^{\ast},...,z_{p}^{\ast}\in Z_{+}^{\ast}$
such that
\begin{equation}
f(x)+\left(  \left\langle z_{1}^{\ast},g(x)\right\rangle ,...,\left\langle
z_{p}^{\ast},g(x)\right\rangle \right)  -f(\widehat{x})\notin-\mathbb{R}%
_{++}^{p},\;\forall x\in\mathbb{R}^{n}.\label{3.4}%
\end{equation}

\end{lemma}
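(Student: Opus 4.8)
The plan is to recognize $(P)$ as a special case of the abstract constrained vector optimization problem treated in \cite{DIGOLOLO} and to invoke their general Lagrangian/conjugate duality characterization of weak efficiency (their Theorem~5.1), then to verify that all hypotheses required by that theorem are satisfied in our concrete setting. Concretely, in the notation of \cite{DIGOLOLO} we set the decision space $X=\mathbb{R}^{n}$, the objective space $Y=\mathbb{R}^{p}$ ordered by $\mathbb{R}_{+}^{p}$, and the constraint space $Z$ as in the paragraph preceding the lemma, a linear subspace of $\mathbb{R}^{T}$ equipped with a locally convex topology finer than pointwise convergence and ordered by $Z\cap\mathbb{R}_{+}^{T}$; the objective map is $f=(f_{1},\dots,f_{p})$ and the constraint map is $g(x)=(g_{t}(x))_{t\in T}$, taking the value $+\infty_{Z}$ off $\operatorname{dom}g$. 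First I would check the standing assumptions: $f$ is $\mathbb{R}_{+}^{p}$-convex and finite-valued (each $f_{i}$ is convex finite-valued by hypothesis), $g$ is $(Z\cap\mathbb{R}_{+}^{T})$-convex (each $g_{t}\in\Gamma(\mathbb{R}^{n})$, hence convex, so $g$ is componentwise convex, which is exactly convexity with respect to the pointwise cone), and the feasible set $S=\{x\in\mathbb{R}^{n}\mid g(x)\in -(Z\cap\mathbb{R}_{+}^{T})\cup\text{(the cone)}\}$ is nonempty by the consistency assumption on $(P)$. The lower semicontinuity of the $g_{t}$ ensures $g$ has the closedness properties used in \cite{DIGOLOLO}.

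Next I would transcribe their Theorem~5.1 in our coordinates. The Lagrangian there, for a dual multiplier which in our case is a tuple $(z_{1}^{\ast},\dots,z_{p}^{\ast})\in(Z_{+}^{\ast})^{p}$ acting componentwise on the vector objective, is precisely $x\mapsto f(x)+(\langle z_{1}^{\ast},g(x)\rangle,\dots,\langle z_{p}^{\ast},g(x)\rangle)$, with the convention $(z^{\ast}\circ g)(x)=+\infty$ when $g(x)=+\infty_{Z}$ already fixed in the excerpt. The conjugate of this Lagrangian, evaluated at the zero matrix $0_{p\times n}\in\mathcal{M}_{p\times n}$ and in the weak-supremum (Tanino) sense, produces exactly the weak infimum of the Lagrangian over $x\in\mathbb{R}^{n}$; the union over all dual tuples of the sets $(f+(z_{1}^{\ast},\dots,z_{p}^{\ast})\circ g)^{\ast}(M)+\mathbb{R}_{+}^{p}$ is the "dual feasible region" whose closedness at the point $(0_{p\times n},-f(\widehat{x}))$ is the closedness-regarding-condition $(i)$. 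Theorem~5.1 of \cite{DIGOLOLO} states that, under exactly this closedness hypothesis, weak efficiency of $\widehat{x}$ is equivalent to the existence of a dual tuple making $\widehat{x}$ a weak minimizer of the (unconstrained) Lagrangian, and the latter is unwound—using that $\widehat{x}\in S$ forces $\langle z_{i}^{\ast},g(\widehat{x})\rangle\le 0$ (so the Lagrangian at $\widehat{x}$ is $\le f(\widehat{x})$ componentwise) together with the zero-duality-gap that the closedness condition delivers—into the inequality system \eqref{3.4}, namely $f(x)+(\langle z_{1}^{\ast},g(x)\rangle,\dots,\langle z_{p}^{\ast},g(x)\rangle)-f(\widehat{x})\notin-\mathbb{R}_{++}^{p}$ for all $x\in\mathbb{R}^{n}$.

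The main obstacle, and the only step requiring genuine care rather than bookkeeping, is matching conventions: making sure that the componentwise (rather than scalarized) form of the vector Lagrangian used here agrees with the multiplier format in \cite{DIGOLOLO}, that the weak-supremum/weak-infimum operators and the $+\infty_{Z}$, $+\infty_{\mathbb{R}^{p}}$ conventions line up so that the set in \eqref{closed set} is literally the epigraph-type object appearing in their theorem, and that the perturbation point is correctly identified as $(0_{p\times n},-f(\widehat{x}))$ (the $0_{p\times n}$ because no perturbation of the objective is involved, the $-f(\widehat{x})$ because the relevant value is the negative of the objective at $\widehat{x}$). Once these identifications are in place, the statement is a direct specialization, so the proof reduces to the single sentence that it is "a straightforward consequence of \cite[Theorem~5.1]{DIGOLOLO}" once the dictionary above is recorded; I would present the dictionary briefly and then cite.
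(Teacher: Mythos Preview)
Your proposal is correct and takes essentially the same approach as the paper: the paper states only that the lemma is ``a straightforward consequence of \cite[Theorem 5.1]{DIGOLOLO}'' given the assumptions on the data, which is precisely the citation-plus-dictionary argument you outline. Your extra care in matching the componentwise Lagrangian, the Tanino conjugate, and the perturbation point $(0_{p\times n},-f(\widehat{x}))$ to the objects in \cite{DIGOLOLO} is exactly the bookkeeping the paper leaves implicit.
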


Observe that (\ref{3.4}) holds whenever $\widehat{x}$ is a weak efficient
solution for the ordinary multiobjective problem%
\[
\ \text{minimize}\ f(x)\ \text{subject to}\ \left\langle z_{i}^{\ast
},g(x)\right\rangle \leq0,i\in I.
\]
We now get from Lemma \ref{OCA1}\ characterizations of the weak efficient
solutions involving continuous linear functionals on $Z$ instead of
subgradients of the data at $\widehat{x}$ (as in the KKT-type theorems) for
three particular types of convex MOSIP problems.

\begin{theorem}
[Characterization under SSCQ and SCQ via linear functionals]\label{ThZ}Let
$\widehat{x}\in S.$ Then the following statements hold:\newline$(i)$ Assume
that $T$ is countable and $0_{\mathbb{R}^{T}}\in\operatorname*{sqri}E,$ where
\begin{equation}
E:=g\left(
{\displaystyle\bigcap\limits_{t\in T}}
\operatorname{dom}g_{t}\right)  +\mathbb{R}_{+}^{T}.\label{Set E}%
\end{equation}
Then $\widehat{x}$ is a weak efficient solution of $\left(  P\right)  $ if and
only if there exist multiplier vectors $\lambda^{1},...,$ $\lambda^{p}%
\in\mathbb{R}_{+}^{T},$ all of them with finite support, such that there is no
$x\in\mathbb{R}^{n}$ satisfying
\[
f_{i}(x)+%
{\displaystyle\sum\nolimits_{t\in T}}
\lambda_{t}^{i}g_{t}(x)<f(\widehat{x}),\text{ }\forall i\in I.
\]
\newline$(ii)$ Assume that $T$ is a normal topological space space, the
functions $\left(  \cdot,x\right)  \mapsto g_{\cdot}\left(  x\right)  $ are
continuous for all $x\in\mathbb{R}^{n},$\ the supremum and infimum functions
$\psi$ and $\iota$\ (defined in (\ref{2.3}) and (\ref{2.4}), respectively) are
real-valued, and SSCQ holds. Then $\widehat{x}$ is a weak efficient solution
of $\left(  P\right)  $ if and only if there exist nonnegative bounded
finitely additive measures $\mu_{1},...,\mu_{p}$ such that there is no
$x\in\mathbb{R}^{n}$ satisfying
\[
f_{i}(x)+\int\nolimits_{T}g_{t}(x)d\mu_{i}\left(  t\right)  <f(\widehat
{x}),\;\forall i\in I.
\]
\newline$(iii)$ Assume that $\left(  P\right)  $ is continuous and either
$0_{\mathcal{C}\left(  T\right)  }\in\operatorname*{ri}\left[  g\left(
\mathbb{R}^{n}\right)  +\left(  Z\cap\mathbb{R}_{+}^{T}\right)  \right]  $ or
the SCQ holds. Then $\widehat{x}$ is a weak efficient solution of $\left(
P\right)  $ if and only if there exist $p$ nonnegative regular Borel measures
on $T,$ $\mu_{1},...,\mu_{p},$ such that there is no $x\in\mathbb{R}^{n}$
satisfying
\[
f_{i}(x)+\int\nolimits_{T}g_{t}(x)d\mu_{i}\left(  t\right)  <f(\widehat
{x}),\;\forall i\in I.
\]

\end{theorem}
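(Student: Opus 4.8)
The plan is to derive all three parts from Lemma \ref{OCA1} by verifying, in each case, that the set in \eqref{closed set} is closed regarding $(0_{p\times n},-f(\widehat{x}))$, and then translating the abstract optimality condition \eqref{3.4} into the stated concrete form by identifying the dual space $Z^{\ast}$ and its positive cone for an appropriate choice of the constraint space $Z$. First I would fix the constraint space: in $(i)$ take $Z=\mathbb{R}^{(T)}$ (finite-support sequences) or rather $Z=\mathbb{R}^{T}$ with the product topology, so that $Z^{\ast}=\mathbb{R}^{(T)}$ and $Z_{+}^{\ast}=\mathbb{R}_{+}^{(T)}$, whence each $z_{i}^{\ast}$ is a finite-support multiplier vector $\lambda^{i}\in\mathbb{R}_{+}^{T}$ and $\langle z_{i}^{\ast},g(x)\rangle=\sum_{t\in T}\lambda_{t}^{i}g_{t}(x)$; in $(ii)$ take $Z=l_{\infty}(T)$, whose dual is the space $\mathrm{ba}(T)$ of bounded finitely additive measures, with positive cone the nonnegative ones; in $(iii)$ take $Z=\mathcal{C}(T)$, whose dual is (by the Riesz representation theorem) the space of regular Borel measures on the compact $T$, with positive cone the nonnegative ones. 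Once $Z^{\ast}$ is identified, condition \eqref{3.4} reads exactly as ``there is no $x$ with $f_i(x)+\langle z_i^\ast,g(x)\rangle-f_i(\widehat x)<0$ for all $i$'', which is the displayed conclusion in each part. So the real content is the closedness condition $(i)$ of Lemma \ref{OCA1}.

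For the closedness verification I would invoke a Fenchel--Lagrange strong duality / closedness criterion for convex vector problems. The natural tool is that the set in \eqref{closed set} is closed regarding the relevant point whenever a regularity condition on the perturbed constraint holds; here the convexity and lower semicontinuity of all $g_t$ and the finiteness of the $f_i$ are in force. In part $(i)$, the hypothesis $0_{\mathbb{R}^{T}}\in\operatorname*{sqri}E$ with $E$ as in \eqref{Set E} is precisely the interiority-type qualification (a strong quasi-relative interior condition, suited to the non-metrizable product space $\mathbb{R}^{T}$ with $T$ countable) that guarantees the closedness of the constraint-perturbed epigraphical sum and hence of \eqref{closed set}; I would cite the appropriate closedness/duality result for convex systems in locally convex spaces (of Fenchel--Rockafellar type, e.g.\ the generalized Slater/sqri condition). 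In part $(iii)$, the condition $0_{\mathcal{C}(T)}\in\operatorname*{ri}[g(\mathbb{R}^n)+(Z\cap\mathbb{R}_+^T)]$ plays the same role directly, while under SCQ one argues that a Slater point $x_0$ gives $g(x_0)\in-\mathbb{R}_{++}^T\cap\mathcal{C}(T)$, i.e.\ $g(x_0)$ lies in the interior of $-Z_+$; since $(P)$ is continuous, $\psi$ and $\iota$ are real-valued and $g(x_0)$ is bounded away from $0$, which yields the interior-point condition, hence closedness. In part $(ii)$, SSCQ together with the real-valuedness of $\psi$ and $\iota$ and the continuity of $t\mapsto g_t(x)$ gives a strong Slater point $x_0$ with $g_t(x_0)\le-\varepsilon$ for all $t$, so $g(x_0)$ is an interior point of $-l_\infty(T)_+$; this is again the Slater-type qualification that forces \eqref{closed set} to be closed, and then Lemma \ref{OCA1} applies.

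After closedness is secured, the remaining work is the translation of \eqref{3.4}. In each part the negated condition ``$f(x)+(\langle z_1^\ast,g(x)\rangle,\dots,\langle z_p^\ast,g(x)\rangle)-f(\widehat x)\notin-\mathbb{R}_{++}^p$ for all $x$'' is literally ``there is no $x$ with componentwise strict inequality $f_i(x)+\langle z_i^\ast,g(x)\rangle<f_i(\widehat x)$ for all $i\in I$'', so one only needs to rewrite $\langle z_i^\ast,g(x)\rangle$ using the concrete dual representation: a finite sum in $(i)$, an integral against a bounded finitely additive measure in $(ii)$, an integral against a regular Borel measure in $(iii)$. A minor point to handle in each case is the treatment of infeasible $x$, i.e.\ $x\notin\operatorname{dom}g$: by the convention $(z_i^\ast\circ g)(x)=+\infty$ there, the strict inequality cannot hold, so restricting attention to $x\in\operatorname{dom}g$ changes nothing.

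The main obstacle I expect is getting the closedness condition $(i)$ of Lemma \ref{OCA1} from the stated qualifications, since this requires matching the abstract weak-supremum/set-valued-conjugate machinery of \cite{DIGOLOLO} with the classical scalar closedness/duality results available for each of the three concrete constraint spaces --- in particular verifying that the $\operatorname*{sqri}$ condition in the non-normable space $\mathbb{R}^{T}$ (part $(i)$) and the $\operatorname*{ri}/$Slater conditions in $l_\infty(T)$ and $\mathcal{C}(T)$ (parts $(ii)$ and $(iii)$) are exactly strong enough to close the relevant sum of epigraphs. The bookkeeping to pass from a scalarized (single $z^\ast$) closedness statement to the vector union over $(z_1^\ast,\dots,z_p^\ast)\in(Z_+^\ast)^p$ in \eqref{closed set} also needs care, but it is routine once the scalar case is in hand.
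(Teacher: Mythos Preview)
Your approach is essentially the paper's: apply Lemma~\ref{OCA1} after choosing an appropriate constraint space $Z$ in each part, verify a Slater/interiority qualification that forces the closedness condition, and then read off \eqref{3.4} in concrete form via the identification of $Z^{\ast}$. A few points of comparison. First, the paper does not derive the closedness from scratch; it invokes a ready-made criterion (\cite[Lemma~3.8]{DIGOLOLO}) stating that either Q1 (existence of $x_0$ with $g(x_0)\in-\operatorname{int}(Z\cap\mathbb{R}_{+}^{T})$) or Q2 ($Z$ Fr\'echet and $0_Z\in\operatorname{sqri}E$) already closes the set in \eqref{closed set}, so your anticipated ``scalar-to-vector bookkeeping'' is unnecessary. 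Second, in part~(i) you call $\mathbb{R}^{T}$ with the product topology ``non-metrizable'' when $T$ is countable; this is precisely backwards---countability of $T$ is what makes $\mathbb{R}^{T}$ a Fr\'echet space, and that is the reason the hypothesis is there (so that Q2 applies). Third, in part~(ii) the paper takes $Z$ to be the space of bounded continuous functions on the normal space $T$ (whose dual is identified via \cite[Theorem~IV.6.2]{DUSCH}), using the assumed continuity of $t\mapsto g_t(x)$; your choice $Z=l_{\infty}(T)$ also works and in fact does not use the normality/continuity hypotheses, but the resulting measures live on $2^{T}$ rather than on the algebra generated by closed sets. Otherwise the two arguments coincide.
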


\begin{proof}
According to \cite[Lemma 3.8]{DIGOLOLO}, any of the following CQs guarantees
the closedness of the set in (\ref{closed set}), and so the fulfillment of
statement $(i)$ in Lemma \ref{k4}:\newline Q1: There exists $x_{0}%
\in\mathbb{R}^{n}$ such that $g(x_{0})\in-\operatorname*{int}\left(
Z\cap\mathbb{R}_{+}^{T}\right)  .$\newline Q2: $Z$ is a Fr\'{e}chet space and
$0_{Z}\in\operatorname*{sqri}\left(  g\left(
{\displaystyle\bigcap\limits_{t\in T}}
\operatorname{dom}g_{t}\right)  +\left(  Z\cap\mathbb{R}_{+}^{T}\right)
\right)  .$\newline So, we have just to show that at either Q1 or Q2 holds
under the assumptions of $(i)$ - $(iii)$, so that the conclusion follows from
Lemma \ref{OCA1}.\newline$(i)$ Take $Z=\mathbb{R}^{T}$ equipped with the
product topology. Observe that the topological dual $Z^{\ast}$ of $Z$\ is here
the subspace of $\mathbb{R}^{T}$ formed by the functions with finite support.
It can be realized that $Z$ is a Fr\'{e}chet space if and only if $T$ is
countable. Moreover, $y^{\ast}\circ f$ is continuous for all $y^{\ast}%
\in\mathbb{R}^{p}$, so that Q2 holds. \newline$(ii)$ Since $-\infty
<\iota\left(  x\right)  \leq g_{t}(x)\leq\psi\left(  x\right)  <+\infty$ for
all $t\in T$ and $x\in\mathbb{R}^{n},$ $g:\mathbb{R}^{n}\rightarrow Z,$ where
$Z:=\left\{  h\in\mathcal{C}\left(  T\right)  \mid h\text{ is bounded}%
\right\}  .$ Since $T$ is a normal space, its dual space $Z^{\ast}$ is formed
by the bounded finitely additive measures on $T$ \cite[Theorem IV.6.2]{DUSCH}.
Moreover, it is easy to see that $\operatorname*{int}\left(  Z\cap
\mathbb{R}_{+}^{T}\right)  =\left\{  h\in Z:\inf_{T}h>0\right\}  .$ If $x_{0}$
is a strong Slater point with associated scalar $\varepsilon$, then $\inf
_{T}\left[  -g\left(  x_{0}\right)  \right]  \geq\varepsilon>0,$ so that
$g\left(  x_{0}\right)  \in-\operatorname*{int}\left(  Z\cap\mathbb{R}_{+}%
^{T}\right)  $ and Q1 holds. \newline$(iii)$ The continuity assumption implies
that $g:\mathbb{R}^{n}\rightarrow\mathcal{C}\left(  T\right)  .$\ Take
$Z=\mathcal{C}\left(  T\right)  $ equipped with the supremum norm, whose
topological dual is formed by the regular Borel measures on $T.$ Under the
SCQ, Q1 holds by the same argument as in $(ii),$ while, taking into account
that any Banach space is Fr\'{e}chet and $\operatorname{dom}g_{t}%
=\mathbb{R}^{n}$ for all $t\in T$ in this case, $(Q2)$ is equivalent here to
$0_{Z}\in\operatorname*{ri}\left[  g\left(  \mathbb{R}^{n}\right)  +\left(
Z\cap\mathbb{R}_{+}^{T}\right)  \right]  .$
\end{proof}

We now revisit Examples \ref{Example1} and \ref{Example2}.

In Example \ref{Example1}, $(P)$ satisfies LFMCQ at $\widehat{x}=0$. Since
$F_{\ast}(\widehat{x})+G_{\ast}(\widehat{x})=\left[  -2,-1\right]
+\mathbb{R}_{+}=\left[  -2,+\infty\right)  \ni0,$ the weak KKT condition holds
at $\widehat{x}=0.$ Moreover, $\xi=\left(  -2,-1\right)  $ is fixed and
\begin{equation}
\vartheta(x,\lambda)=-\left(  2\lambda_{1}+\lambda_{2}\right)  \inf\left\{
x-y\mid y\leq0\right\}  =-\left(  2\lambda_{1}+\lambda_{2}\right)
x,\label{4.2}%
\end{equation}
so that $\vartheta(\widehat{x},\lambda)=0$ for all $\lambda\in\Delta_{+}^{2}.$
Concerning Theorem \ref{ThZ}, notice that the index set $T$\ (equipped with
the metric induced by the absolute value)\ is countable, it is normal (as it
is metric) and the functions $\left(  \cdot,x\right)  \mapsto g_{\cdot}\left(
x\right)  $ are continuous for all $x\in\mathbb{R}^{n}$ (as the topology on
$T$ is the discrete one), but the condition $0_{\mathbb{R}^{T}}\in
\operatorname*{sqri}\left[  g\left(  \mathbb{R}^{n}\right)  +\mathbb{R}%
_{+}^{T}\right]  $ can hardly be checked (as the set $g\left(  \mathbb{R}%
^{n}\right)  +\mathbb{R}_{+}^{T}$ is here the sum of the cone of nonnegative
sequences with the line in $\mathbb{R}^{T}$ which passes through the sequence
$\left\{  u_{k}\right\}  _{k=0}^{\infty}$ such that $u_{0}=0,$ $u_{2k+1}%
=-\frac{1}{k+1},$ $k=0,1,2,\ldots,$ and $u_{2k}=-\frac{1}{k},$ $k=1,2,\ldots
.,$ and is parallel to the sequence $\left\{  v_{k}\right\}  _{k=0}^{\infty}$
such that $v_{0}=2,$ $v_{t}=1$ for $t$ even, and $v_{t}=3$ for $t$ odd).
Observe that, in either case, the optimality condition in $(i)$ holds for
$\lambda^{1}\in\mathbb{R}^{T}$ such that $\lambda_{0}^{1}=1$ and $\lambda
_{t}^{1}=0$ for all $t\in\mathbb{N},$ and $\lambda^{2}\in\mathbb{R}^{T}$
arbitrary with finite , as $f_{1}+\lambda_{0}^{1}g_{0}$ is the null function
on $\mathbb{R}.$ Moreover, since $\psi$ and $\iota\left(  x\right)
=\min\left\{  x-1,3x-1\right\}  $ are real-valued and SSCQ holds, $(ii)$
applies, with $\mu_{1}$ being the atomic measure concentrating a unit mass at
$0$\ and $\mu_{2}$ arbitrary as $f_{1}\left(  x\right)  +\int\nolimits_{T}%
g_{t}(x)d\mu_{i}\left(  t\right)  =f_{1}\left(  x\right)  +g_{0}\left(
x\right)  =0$ for all $x\in\mathbb{R}.$ So, we can assert that $\widehat{x}%
$\ is a weak efficient solution for $(P)$ on the basis of Theorems \ref{qwe},
\ref{gap1} and \ref{ThZ}.

The situation is quite different at $\widehat{x}=0_{2}$ in Example
\ref{Example2}, where ACQ holds while CCCQ, LFMCQ, and the weak KKT condition
fail. This means that no conclusion on the weak efficiency of $\widehat{x}$
can be obtained from Theorems \ref{3.2} and \ref{qwe}. Concerning the gap
function, $\xi=\left(  -1,0,-1,0\right)  $ is also fixed and%
\begin{equation}
\vartheta(x,\lambda)=\left(  \lambda_{1}+\lambda_{2}\right)  \sup\left\{
y_{1}-x_{1}\mid y_{1}\leq0\right\}  =-\left(  \lambda_{1}+\lambda_{2}\right)
x_{1},\label{4.3}%
\end{equation}
so that we have again $\vartheta(\widehat{x},\lambda)=0$ for all $\lambda
\in\Delta_{+}^{2}.$ Regarding Theorem \ref{ThZ}, observe that $T$ is the same
as in Example \ref{Example1}, but neither $(ii)$ nor $(iii)$ can be applied
because SSCQ fails and $\psi$ is not real-valued, respectively, while checking
the interiority condition in $(i)$ is again a hard task. So, only Theorem
\ref{gap1} allows to conclude easily that $\widehat{x}$ is a weak efficient
solution. This example shows that we cannot replace LFMCQ by ACQ, KTCQ or WADQ
in Corollary \ref{k4}.

\section{Optimality conditions for efficiency}

We say that the \emph{strong KKT condition} holds at $\widehat{x}\in S$ when
there exist $\alpha_{i}>0$ for $i\in I$ with $\sum_{i=1}^{p}\alpha_{i}=1$, and
$\beta_{t}\geq0$ for $t\in T(\widehat{x})$, with $\beta_{t}\neq0$ for finitely
many indexes, such that%
\[
0_{n}\in\sum_{i=1}^{p}\alpha_{i}\partial f_{i}(\widehat{x})+\sum_{t\in
T(\widehat{x})}\beta_{t}\partial g_{t}(\widehat{x}).
\]
Denoting $\Delta_{++}^{p}:=\left\{  \left(  \alpha_{1},...,\alpha_{p}\right)
\in\mathbb{R}_{++}^{p}\mid\sum_{i=1}^{p}\alpha_{i}=1\right\}  , $
\cite[Theorem 6.9]{ROC} yields
\[
\operatorname*{ri}F_{\ast}\left(  \widehat{x}\right)  =%
{\displaystyle\bigcup\limits_{\left(  \alpha_{1},...,\alpha_{p}\right)
\in\Delta_{++}^{p}}}
\sum_{i=1}^{p}\alpha_{i}\operatorname*{ri}\partial f_{i}(\widehat{x})\subseteq%
{\displaystyle\bigcup\limits_{\left(  \alpha_{1},...,\alpha_{p}\right)
\in\Delta_{++}^{p}}}
\sum_{i=1}^{p}\alpha_{i}\partial f_{i}(\widehat{x}),
\]
and the inclusion is an equation whenever $f_{i}$ is differentiable at
$\widehat{x}$ for all $i\in I.$ Thus, $0_{n}\in\operatorname*{ri}F_{\ast
}\left(  \widehat{x}\right)  +G_{\ast}\left(  \widehat{x}\right)  $\ is a
sufficient condition for the strong KKT condition at $\widehat{x}$ and it is
also necessary when the objective functions are differentiable at $\widehat
{x}.$

\begin{theorem}
[Strong KKT necessary condition under EADQ and MOQ]\label{SKKT2}Let
$\widehat{x}$ be an efficient solution of $(P)$. If the EADQ and the MOQ hold
at $\widehat{x}$, then $(P)$ satisfies the strong KKT condition at
$\widehat{x}.$
\end{theorem}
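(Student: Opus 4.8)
The plan is to reduce the multiobjective problem to a family of scalar problems indexed by $i\in I$, one for each objective $f_i$ "separated out" from the others, and then apply a scalar KKT result on each piece, finally patching the multipliers together. First I would fix an efficient solution $\widehat{x}$ and observe that, since $\widehat{x}$ is efficient, for each $i\in I$ the point $\widehat{x}$ is a (weak) minimizer of $f_i$ on the set $Q^{i}(\widehat{x})\cap S = \{x\in S\mid f_l(x)\le f_l(\widehat{x}),\ \forall l\neq i\}$; indeed if some $x$ in that set had $f_i(x)<f_i(\widehat{x})$ we would get $f(x)\le f(\widehat{x})$, contradicting efficiency. Translating this into directional-derivative language: there is no $d$ with $f_i'(\widehat{x};d)<0$ that also lies in the contingent cone $C(Q^i(\widehat{x}),\widehat{x})$ — more precisely no common descent direction for $f_i$ among the tangent directions to $Q^i(\widehat{x})$.

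Next I would bring in the data qualifications. The EADQ says $F^0(\widehat{x})\cap G^0(\widehat{x})\subseteq\bigcap_{i=1}^p C(Q^i(\widehat{x}),\widehat{x})$, and Proposition on MOQ tells us MOQ is equivalent to $\operatorname{span}(F(\widehat{x}))=\mathbb{R}^n$, i.e. $F^0(\widehat{x})\cap(-F^0(\widehat{x}))=\{0_n\}$, equivalently $F^0(\widehat{x})$ is a pointed cone whose polar has nonempty interior; in particular every nonzero element of $F^0(\widehat{x})$ lies in some $(\partial f_i(\widehat{x}))^-$. The idea is: take any $d\in F^0(\widehat{x})\cap G^0(\widehat{x})$, $d\neq 0_n$; by MOQ there is $i$ with $\xi'd<0$ for all $\xi\in\partial f_i(\widehat{x})$, i.e. $f_i'(\widehat{x};d)<0$; by EADQ, $d\in C(Q^i(\widehat{x}),\widehat{x})$, so $d$ is a descent direction for $f_i$ tangent to $Q^i(\widehat{x})$, contradicting the first paragraph. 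Hence $F^0(\widehat{x})\cap G^0(\widehat{x})=\{0_n\}$. Taking polars and using the bipolar theorem, this gives $\overline{\operatorname{cone}}(F(\widehat{x})\cup G(\widehat{x}))=\mathbb{R}^n$, and in particular $0_n\in$ a certain closed convex set generated by the subgradients; the delicate point is to upgrade this "span is everything" conclusion to the genuine KKT statement $0_n\in\operatorname{ri}F_\ast(\widehat{x})+G_\ast(\widehat{x})$ with \emph{strictly positive} objective multipliers and finitely supported constraint multipliers.

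The finishing step is a separation/Gordan-type argument. Knowing there is no nonzero $d$ with $f_i'(\widehat{x};d)\le 0$ for all $i$ simultaneously and $\xi'd\le 0$ for all $\xi\in G(\widehat{x})$ — actually one needs the sharper "no $d$ with $\max_i f_i'(\widehat{x};d)<0$ and $d\in G^0(\widehat{x})$", which the above delivers — one applies a Gordan-type alternative (a convex separation, as in \cite[Corollary 11.4.1]{ROC} used in the proof of Theorem \ref{3.2}) to the finite-dimensional system $\{F_\ast(\widehat{x})\}$ versus $-\overline{G_\ast(\widehat{x})}$. This yields $0_n\in F_\ast(\widehat{x})+\overline{G_\ast(\widehat{x})}$; MOQ forces the objective part of this representation to involve \emph{all} $p$ subgradient sets with strictly positive weights (otherwise a missing $i$ would reintroduce a forbidden descent direction through $F^0$), and EADQ together with Carathéodory's theorem gives the finite support on the constraint side and also the closedness needed to replace $\overline{G_\ast(\widehat{x})}$ by $G_\ast(\widehat{x})$.

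\textbf{Main obstacle.} The hard part will be the passage from the non-existence of a common descent direction to the \emph{strict} positivity of all $p$ objective multipliers: an ordinary Gordan argument only gives nonnegative $\alpha_i$ not all zero (the weak KKT condition of Theorem \ref{3.2}), and it is precisely the MOQ — via the equivalence $\operatorname{span}(F(\widehat{x}))=\mathbb{R}^n$ of the Proposition — that must be invoked to rule out any $\alpha_i=0$; pinning down exactly where MOQ enters and checking that the contingent cones $C(Q^i(\widehat{x}),\widehat{x})$ can be safely replaced by their convex-analytic description $\{d\mid f_i'(\widehat{x};d)\le 0\}\cap G^0(\widehat{x})$ under EADQ will require the most care. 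A secondary technical point is ensuring the constraint-side multipliers can be taken with finite support, which follows from Carathéodory applied in $\mathbb{R}^n$ once the representation $0_n\in F_\ast(\widehat{x})+G_\ast(\widehat{x})$ is in hand.
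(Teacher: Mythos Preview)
Your first two paragraphs match the paper's proof almost exactly: the paper also shows, for each $l\in I$, that $(\partial f_l(\widehat{x}))^-\cap D(Q^l(\widehat{x}),\widehat{x})=\emptyset$ (your ``$\widehat{x}$ minimizes $f_l$ on $Q^l$'' observation), passes to the contingent cone by continuity of $f_l'(\widehat{x};\cdot)$, and then uses MOQ\,+\,EADQ to conclude that any nonzero $d\in F^0(\widehat{x})\cap G^0(\widehat{x})$ would land in $\big(\bigcup_i(\partial f_i(\widehat{x}))^-\big)\cap\big(\bigcap_i C(Q^i,\widehat{x})\big)$, a contradiction. So far so good.

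The finishing step, however, is where you go astray. You propose to first obtain $0_n\in F_\ast(\widehat{x})+\overline{G_\ast(\widehat{x})}$ via a Gordan/separation argument, then upgrade to strict positivity of the $\alpha_i$ by an ad hoc argument (``a missing $i$ would reintroduce a forbidden descent direction''), and finally drop the closure on $G_\ast$ using EADQ. Two problems: (a) EADQ says nothing about closedness of $G_\ast(\widehat{x})$, so that step is simply unfounded; (b) your upgrade argument is vague and does not obviously work---having $0_n=\sum_{i\neq j}\alpha_i\xi_i+g$ with $\alpha_j=0$ does not by itself produce a nonzero vector in $F^0\cap G^0$. The ``main obstacle'' you flag is real for the route you chose, but it is an artifact of that route.

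The paper avoids both difficulties by aiming directly at $0_n\in\operatorname{ri}\big(F_\ast(\widehat{x})\big)+G_\ast(\widehat{x})$ and arguing by contradiction. If this fails, then $\operatorname{ri}\big(F_\ast(\widehat{x})\big)\cap\big(-G_\ast(\widehat{x})\big)=\emptyset$, and the \emph{proper} separation theorem \cite[Theorem~11.3]{ROC} (not ordinary separation) yields a nonzero $d\in F^0(\widehat{x})\cap G^0(\widehat{x})$, contradicting what you already established. No closedness of $G_\ast$ is needed, and the relative interior automatically delivers strictly positive $\alpha_i$ via \cite[Theorem~6.9]{ROC}, since $\operatorname{ri}(F_\ast(\widehat{x}))\subseteq\{\sum_i\alpha_i\xi_i\mid\alpha_i>0,\ \sum_i\alpha_i=1,\ \xi_i\in\partial f_i(\widehat{x})\}$. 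Replace your third paragraph with this proper-separation step and the proof is complete.
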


\begin{proof}
For the sake of simplicity, we replace $Q^{l}(\widehat{x})$ by $Q^{l}$ in this
section. We can assume without loss of generality that $p\geq2.$ We present
the proof in four steps.\newline\textbf{Step 1.} We claim that
\begin{equation}
\big(\partial f_{l}(\widehat{x})\big)^{-}\cap D(Q^{l},\widehat{x}%
)=\emptyset\ ,\ \forall l\in I.\label{95301}%
\end{equation}
On the contrary, suppose that for some $l\in I$ there is a vector $d$ such
that
\begin{equation}
d\in\big(\partial f_{l}(\widehat{x})\big)^{-}\cap D(Q^{l},\widehat
{x}).\label{953}%
\end{equation}
By the definition of $D(Q^{l},\widehat{x})$, there exists a $\delta>0$ such
that $\widehat{x}+\varepsilon d\in Q^{l}$ for each $\varepsilon\in(0,\delta)$.
Thus, due to the definition of $Q^{l},$ we obtain that
\begin{equation}
\left\{
\begin{array}
[c]{ll}%
f_{i}(\widehat{x}+\varepsilon d)\leq f_{i}(\widehat{x}), & \forall i\in
I\setminus\{l\},\ \forall\varepsilon\in(0,\delta),\\
\widehat{x}+\varepsilon d\in S, & \forall\varepsilon\in(0,\delta).
\end{array}
\right. \label{751}%
\end{equation}
On the other hand, (\ref{953}) leads to $f_{l}^{\prime}(\widehat{x};d)<0$.
This means that there exists a $\delta_{l}>0$ satisfying
\begin{equation}
f_{l}(\widehat{x}+\varepsilon d)-f_{l}(\widehat{x})<0,\text{ }\forall
\varepsilon\in(0,\delta_{l}).\label{3.5}%
\end{equation}
Taking $\widehat{\delta}:=\min\{\delta,\delta_{l}\}$ and $\varepsilon
\in(0,\widehat{\delta})$, (\ref{3.5}) and (\ref{751}) contradict the
efficiency of $\widehat{x}$. Therefore, our claim holds.\newline\textbf{Step
2.} Let $\widehat{d}\in C(Q^{l},\widehat{x})=\overline{D(Q^{l},\widehat{x})}$
for some $l\in I$. Then, there exists a sequence $\{d_{k}\}_{k=1}^{\infty}$ in
$D(Q^{l},\widehat{x})$ converging to $\widehat{d}$. By (\ref{95301}) and the
continuity of $f_{l}^{\prime}(\widehat{x};.)$ we get
\[
f_{l}^{\prime}(\hat{x};\hat{d})=f_{l}^{\prime}\big(\hat{x};\lim_{n\rightarrow
\infty}d_{k}\big)=\lim_{n\rightarrow\infty}f_{l}^{\prime}(\hat{x};\hat{d}%
_{k})\geq0,
\]
so that%
\[
\big(\partial f_{l}(\widehat{x})\big)^{-}\cap C(Q^{l},\widehat{x}%
)=\emptyset\ ,\ \forall l\in I.
\]
Hence,
\begin{equation}
\Big(%
{\displaystyle\bigcup\limits_{i=1}^{p}}
\big(\partial f_{i}(\widehat{x})\big)^{-}\Big)\cap\Big(\bigcap_{i=1}%
^{p}C(Q^{i},\widehat{x})\Big)=\emptyset.\label{77889}%
\end{equation}
\newline\textbf{Step 3.} We claim that
\begin{equation}
0_{n}\in\operatorname*{ri}\big(F_{\ast}(\widehat{x})\big)+G_{\ast}(\widehat
{x}).\label{125}%
\end{equation}
On the contrary, suppose that (\ref{125}) does not hold. Then
$\operatorname*{ri}\big(F_{\ast}(\widehat{x})\big)\cap\big(-G_{\ast}%
(\widehat{x})\big)=\emptyset.$ Thus, by the proper separation theorem
(\cite[Theorem 11.3]{ROC}) and noting that $\big(-G_{\ast}(\widehat{x})\big)$
is a convex cone, it follows that there is a hyperplane $H_{d}:=\{x\in
\mathbb{R}^{n}\mid d^{\prime}x=0\}$ for some $d\in\mathbb{R}^{n}%
\setminus\{0\}$ separating $F_{\ast}(\widehat{x})$ and $\big(-G_{\ast
}(\widehat{x})\big)$ properly. In other words, there exists a vector
$d\in\mathbb{R}^{n}$ satisfying $0_{n}\neq d\in\big(F_{\ast}(\widehat
{x})\big)^{0}\cap\big(G_{\ast}(\widehat{x})\big)^{0}=F^{0}(\widehat{x})\cap
G^{0}(\widehat{x}).$ Thus, owning to EADQ and MOQ we conclude that
\[
d\in\Big(%
{\displaystyle\bigcup\limits_{i=1}^{p}}
\big(\partial f_{i}(\widehat{x})\big)^{-}\Big)\cap\Big(\bigcap_{i=1}%
^{p}C(Q^{i},\widehat{x})\Big),
\]
which contradicts (\ref{77889}).\newline\textbf{Step 4.} The result is
immediate from (\ref{125}) and the fact that (see, \cite[Theorem 6.9]{ROC})
\[
\operatorname*{ri}\big(F_{\ast}(\widehat{x})\big)\subseteq\Big \{\sum
_{i=1}^{p}\alpha_{i}\xi_{i}\mid\xi_{i}\in\partial f_{i}(\widehat{x}%
),\ \alpha_{i}>0,\ \sum_{i=1}^{p}\alpha_{i}=1\Big \}.
\]

\end{proof}

The proof of the following theorem is exactly the same as the one of Theorem
\ref{qwe}, and so has been omitted.

\begin{theorem}
[Strong KKT sufficient condition]\label{qwe2}If the strong KKT condition hods
at $\widehat{x}\in S,$ then, $\widehat{x}$ is an efficient solution of
$\left(  P\right)  .$
\end{theorem}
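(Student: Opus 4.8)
The plan is to mimic, essentially verbatim, the proof of Theorem~\ref{qwe} (the weak KKT sufficient condition), since the only change is the strict positivity of the multipliers $\alpha_i$, which turns the conclusion from weak efficiency into efficiency. First I would assume that the strong KKT condition holds at $\widehat{x}\in S$, so that there exist $\alpha_i>0$ for $i\in I$ with $\sum_{i=1}^{p}\alpha_i=1$, a finite set $T^{\ast}\subseteq T(\widehat{x})$, nonnegative scalars $\beta_t$ for $t\in T^{\ast}$, and subgradients $\xi_i\in\partial f_i(\widehat{x})$, $\zeta_t\in\partial g_t(\widehat{x})$ such that $\sum_{i=1}^{p}\alpha_i\xi_i+\sum_{t\in T^{\ast}}\beta_t\zeta_t=0_n$.

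Next I would argue by contradiction: suppose $\widehat{x}$ is \emph{not} an efficient solution of $(P)$. Then there is a feasible $x_{\ast}\in S$ with $f_i(x_{\ast})\le f_i(\widehat{x})$ for all $i\in I$ and $f_j(x_{\ast})<f_j(\widehat{x})$ for at least one $j\in I$. By the subgradient inequality, $\xi_i^{\prime}(x_{\ast}-\widehat{x})\le f_i(x_{\ast})-f_i(\widehat{x})\le 0$ for all $i$, with strict inequality for $i=j$. Here is the one place where strict positivity of the $\alpha_i$ is used: because $\alpha_j>0$ and $\alpha_i\ge 0$ for the rest, we get $\sum_{i=1}^{p}\alpha_i\xi_i^{\prime}(x_{\ast}-\widehat{x})<0$, hence, from the KKT identity, $\sum_{t\in T^{\ast}}\beta_t\zeta_t^{\prime}(x_{\ast}-\widehat{x})=-\sum_{i=1}^{p}\alpha_i\xi_i^{\prime}(x_{\ast}-\widehat{x})>0$.

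Finally, since every $t\in T^{\ast}\subseteq T(\widehat{x})$ satisfies $g_t(\widehat{x})=0$ and $g_t(x_{\ast})\le 0$ (feasibility), the subgradient inequality gives $\zeta_t^{\prime}(x_{\ast}-\widehat{x})\le g_t(x_{\ast})-g_t(\widehat{x})\le 0$, so $\sum_{t\in T^{\ast}}\beta_t\zeta_t^{\prime}(x_{\ast}-\widehat{x})\le 0$, contradicting the strict inequality just derived. This contradiction establishes that $\widehat{x}$ is efficient. I do not anticipate any real obstacle: the proof is a routine adaptation of Theorem~\ref{qwe}, and indeed the paper explicitly says the proof ``is exactly the same'' and omits it. The only subtlety worth flagging is that in the weak case one uses $f_i(x_{\ast})<f_i(\widehat{x})$ for \emph{all} $i$ together with $\alpha_i\ge 0$, whereas here one has $f_i(x_{\ast})\le f_i(\widehat{x})$ for all $i$ with strict inequality for just one index, and it is precisely the hypothesis $\alpha_i>0$ for all $i$ that lets this weaker information still force $\sum_i\alpha_i\xi_i^{\prime}(x_{\ast}-\widehat{x})<0$.
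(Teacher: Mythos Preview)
Your proposal is correct and is exactly the adaptation of the proof of Theorem~\ref{qwe} that the paper has in mind; indeed, the paper explicitly omits the proof of Theorem~\ref{qwe2} because it ``is exactly the same as the one of Theorem~\ref{qwe}.'' Your identification of the single point where the strict positivity of the $\alpha_i$ is needed (to turn $\sum_i\alpha_i\xi_i^{\prime}(x_{\ast}-\widehat{x})\le 0$ into a strict inequality from just one strict component) is the right observation.
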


We now characterize the efficiency through the gap function.

\begin{theorem}
[Characterization under EADQ and MOQ via gap function]\label{gap3}Let
$\widehat{x}\in S.$ The following statements hold true:\newline$(i)$ If
$\vartheta(\widehat{x},\xi,\lambda)=0$ for some $\xi:=(\xi_{1},\ldots,\xi
_{p})\in\prod_{i=1}^{p}\Big(\partial f_{i}\left(  \widehat{x}\right)  \Big)$
and $\lambda>0_{p}$, then $\widehat{x}$ is an efficient solution for
$(P)$.\newline$(ii)$ If $\widehat{x}$ is an efficient solution for $(P)$ where
the EADQ and the MOQ hold, then there exist $\xi\in\prod_{i=1}^{p}\partial
f_{i}(\widehat{x})$ and $\lambda>0_{p}$ such that $\vartheta(\widehat{x}%
,\xi,\lambda)=0.$
\end{theorem}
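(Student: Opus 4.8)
The plan is to mirror the structure of the proof of Theorem \ref{gap1}, exploiting the analogous structure of Theorems \ref{qwe2} and \ref{SKKT2}, while paying careful attention to the strict positivity of $\lambda$.

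For part $(i)$, I would argue by contradiction exactly as in Theorem \ref{gap1}$(i)$: if $\widehat{x}$ is not an efficient solution, there exists $x^{\ast}\in S$ with $f(x^{\ast})\leq f(\widehat{x})$, i.e. $f_{i}(x^{\ast})\leq f_{i}(\widehat{x})$ for all $i\in I$ and $f_{j}(x^{\ast})<f_{j}(\widehat{x})$ for at least one index $j$. By the subgradient inequality, $\xi_{i}^{\prime}(x^{\ast}-\widehat{x})\leq f_{i}(x^{\ast})-f_{i}(\widehat{x})\leq 0$ for all $i$, with strict inequality for $i=j$. Since $\lambda>0_{p}$, the term $\lambda_{j}\xi_{j}^{\prime}(x^{\ast}-\widehat{x})$ is strictly negative and the remaining terms are nonpositive, so $\sum_{i=1}^{p}\lambda_{i}\xi_{i}^{\prime}(x^{\ast}-\widehat{x})<0$, whence $\sum_{i=1}^{p}\lambda_{i}\xi_{i}^{\prime}(\widehat{x}-x^{\ast})>0$ and therefore $\vartheta(\widehat{x},\xi,\lambda)>0$, contradicting the hypothesis. (Here the strict positivity of every component of $\lambda$ is exactly what is needed, in contrast with Theorem \ref{gap1}, where $\lambda\geq 0_{p}$ sufficed because a weak efficient solution is tested against strict inequalities in all components.)

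For part $(ii)$, I would invoke Theorem \ref{SKKT2}: since $\widehat{x}$ is efficient and EADQ and MOQ hold at $\widehat{x}$, the strong KKT condition holds, so there exist $\lambda:=(\lambda_{1},\ldots,\lambda_{p})\in\Delta_{++}^{p}$ (in particular $\lambda>0_{p}$), a finite set $\{t_{1},\ldots,t_{q}\}\subseteq T(\widehat{x})$ with nonnegative scalars $\mu_{t_{1}},\ldots,\mu_{t_{q}}$, subgradients $\xi_{i}\in\partial f_{i}(\widehat{x})$ for $i\in I$, and $\zeta_{t_{m}}\in\partial g_{t_{m}}(\widehat{x})$ for $m=1,\ldots,q$, such that $\sum_{i=1}^{p}\lambda_{i}\xi_{i}+\sum_{m=1}^{q}\mu_{t_{m}}\zeta_{t_{m}}=0_{n}$. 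Then, exactly as in the proof of Theorem \ref{gap1}$(ii)$, for any $y\in S$ and any $m$ one has $g_{t_{m}}(y)\leq 0=g_{t_{m}}(\widehat{x})$, hence $\zeta_{t_{m}}^{\prime}(y-\widehat{x})\leq 0$, and combining with the KKT equation gives $\sum_{i=1}^{p}\lambda_{i}\xi_{i}^{\prime}(y-\widehat{x})=-\sum_{m=1}^{q}\mu_{t_{m}}\zeta_{t_{m}}^{\prime}(y-\widehat{x})\geq 0$, i.e. $\sum_{i=1}^{p}\lambda_{i}\xi_{i}^{\prime}(\widehat{x}-y)\leq 0$ for all $y\in S$; since equality is attained at $y=\widehat{x}$, we conclude $\vartheta(\widehat{x},\xi,\lambda)=\sup_{y\in S}\sum_{i=1}^{p}\lambda_{i}\xi_{i}^{\prime}(\widehat{x}-y)=0$, with $\lambda>0_{p}$ as required.

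I do not expect a genuine obstacle here: both halves are essentially rewrites of the corresponding arguments for weak efficiency, with the strong KKT condition (Theorem \ref{SKKT2}) supplying the strictly positive multipliers in place of Corollary \ref{k4}. The only point requiring a little care is the bookkeeping in part $(i)$: I must use that $f(x^{\ast})\leq f(\widehat{x})$ means componentwise $\leqq$ together with at least one strict inequality, and that the strict positivity of the corresponding $\lambda_{j}$ (rather than merely $\lambda_{j}\geq 0$) is what forces the weighted sum to be strictly negative. No new machinery beyond the subgradient inequality and the already-established Theorem \ref{SKKT2} is needed.
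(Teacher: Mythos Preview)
Your proposal is correct and follows essentially the same approach as the paper: part $(i)$ is the same contradiction argument via the subgradient inequality and the strict positivity of $\lambda$, and part $(ii)$ is, as the paper itself says, ``exactly the same as the one of Theorem \ref{gap1}$(ii)$,'' with Theorem \ref{SKKT2} supplying the strictly positive multipliers in place of Corollary \ref{k4}.
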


\begin{proof}
$(i)$ Assume that $\vartheta(\widehat{x},\xi,\lambda)=0$, while $\widehat{x}$
is not an efficient solution for $(P)$. Then, there exist a $x^{\ast}\in S$
and an index $k\in\{1,,,,,p\}$ such that:
\[
\left\{
\begin{array}
[c]{ll}%
f_{i}(x^{\ast})\leq f_{i}(\widehat{x}),\ \ \forall i\in I, & \\
f_{k}\left(  x^{\ast}\right)  <f_{k}\left(  \widehat{x}\right)  . &
\end{array}
\right.  \
\]
Thus,
\[
\ \left\{
\begin{array}
[c]{ll}%
\xi_{i}^{\prime}(x^{\ast}-\widehat{x})\leq0,\ \ \forall i\in I, & \\
\xi_{k}^{\prime}(x^{\ast}-\widehat{x})<0. &
\end{array}
\right.
\]
From the latter inequalities and the assumption that $\lambda>0_{p}$, we get
$\sum_{i=1}^{p}\lambda_{i}\xi_{i}^{\prime}(\widehat{x}-x^{\ast})>0.$ Hence,
$\vartheta(\widehat{x},\xi,\lambda)>0$, in contradiction with the
assumption.\newline$(ii)$ The proof is exactly the same as the one of Theorem
\ref{gap1}$(ii).$
\end{proof}

In Example \ref{Example1}, the strong KKT condition holds at $\widehat{x}=0$
as
\[
\operatorname*{ri}F_{\ast}\left(  \widehat{x}\right)  +G_{\ast}\left(
\widehat{x}\right)  =\left(  -2,-1\right)  +\mathbb{R}_{+}=\left(
-2,+\infty\right)  \ni0.
\]
Moreover, $\vartheta(\widehat{x},\lambda)=0$ for all $\lambda>0_{2}$ by
(\ref{4.2}). Since the EADQ and the MOQ hold at $\widehat{x},$ we can assert
that $\widehat{x}$ is an efficient solution for $(P)$ for by Theorems
\ref{qwe2} and \ref{gap3}.

In Example \ref{Example2} we have seen that even the weak KKT condition fails
at $\widehat{x}=0_{2}$ while $\vartheta(\widehat{x},\lambda)=0$ for all
$\lambda>0_{2}$ by (\ref{4.3}). Concerning the data qualifications, EADQ holds
but MOQ fails, Thus, no conclusion on the efficiency of $\widehat{x}$\ can be
obtained from the results in this section.

\section{Optimality conditions for isolated efficiency}

We say that the \emph{perturbed KKT condition} holds at $\widehat{x}\in S$
when there exists a positive scalar $\nu$ such that, for any vector $w\in
\nu\mathbb{B}_{n},$ there exist scalars $\alpha_{i}\geq0$ for $i\in I$ with
$\sum_{i=1}^{p}\alpha_{i}=1$, a finite set $T^{\ast}\subseteq T(\widehat
{x}),\ $and corresponding scalars $\beta_{t}\geq0$ for $t\in T^{\ast},$ such
that%
\[
w\in\sum_{i=1}^{p}\alpha_{i}\partial f_{i}(\widehat{x})+\sum_{t\in T^{\ast}%
}\beta_{t}\partial g_{t}(\widehat{x}).
\]
In geometric terms, the perturbed KKT condition holds at $\widehat{x}\in S$ if
and only if
\begin{equation}
0_{n}\in\operatorname*{int}\left(  F_{\ast}(\widehat{x})+G_{\ast}(\widehat
{x})\right)  .\label{4.1}%
\end{equation}

\begin{theorem}
[Perturbed KKT necessary condition under PMFCQ]\label{isolate} Suppose that
$\widehat{x}$ is an isolated efficient solution of $(P)$ with constant $\nu>0$
such that all constraint functions are continuously differentiable around
$\widehat{x}.$ Then, the following statements hold true:\newline$(i)$ If the
PMFCQ holds at $\widehat{x}$, then
\begin{equation}
\nu\mathbb{B}_{n}\subseteq F_{\ast}(\widehat{x})+\bigcap_{\varepsilon
>0}\overline{\operatorname*{cone}}\big(\{\nabla g_{t}(\widehat{x})\mid t\in
T_{\varepsilon}(\widehat{x})\}\big).\label{q1210}%
\end{equation}
\newline$(ii)$ If $(P)$ is continuous, and the MFCQ holds at $\widehat{x}$,
then the perturbed KKT condition holds at $\widehat{x}.$
\end{theorem}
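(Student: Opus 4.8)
The plan is to reduce the isolated efficiency condition to a statement about directional derivatives and then to invoke convex separation together with the PMFCQ. First I would exploit the definition of isolated efficient solution: for every $x \in S$ one has $\max_{i\in I}\{f_i(x)-f_i(\widehat x)\} \geq \nu\|x-\widehat x\|$. Fixing a feasible direction $d \in D(S,\widehat x)$ with $\|d\|=1$ and letting $x = \widehat x + \varepsilon d$, dividing by $\varepsilon$ and letting $\varepsilon \downarrow 0$ yields $\max_{i\in I} f_i'(\widehat x; d) \geq \nu$. Since $\varphi(\cdot):=\max_{i\in I} f_i'(\widehat x;\cdot)$ is convex, positively homogeneous and continuous (as in the proof of Theorem~\ref{3.2}), this inequality extends by homogeneity and density from $D(S,\widehat x)$ to its closure $C(S,\widehat x)$, giving
\[
\max_{i\in I} f_i'(\widehat x; d) \geq \nu\|d\|,\quad \forall d \in C(S,\widehat x).
\]

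Next I would translate the cone $C(S,\widehat x)$ into constraint data. Because the constraint functions are continuously differentiable around $\widehat x$ and the PMFCQ holds, one has a description of $C(S,\widehat x)$ (equivalently $N(S,\widehat x)$) in terms of the $\varepsilon$-active gradients: specifically $N(S,\widehat x) = \bigcap_{\varepsilon>0}\overline{\operatorname{cone}}\big(\{\nabla g_t(\widehat x)\mid t\in T_\varepsilon(\widehat x)\}\big)$, so that, taking polars,
\[
C(S,\widehat x) = \Big(\bigcap_{\varepsilon>0}\overline{\operatorname{cone}}\big(\{\nabla g_t(\widehat x)\mid t\in T_\varepsilon(\widehat x)\}\big)\Big)^0 =: K^0,
\]
where $K$ denotes the closed convex cone on the right-hand side of \eqref{q1210} minus $F_\ast(\widehat x)$. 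This is the step where the PMFCQ is genuinely used (it guarantees the relevant cone is nontrivial and that the Mordukhovich–Nghia description of the normal cone to the feasible set is valid in the noncompact setting).

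Now I would argue by contradiction to obtain \eqref{q1210}. Suppose some $w$ with $\|w\| \leq \nu$ does not belong to $F_\ast(\widehat x) + K$. Since $F_\ast(\widehat x)$ is a nonempty compact convex set (each $\partial f_i(\widehat x)$ is compact) and $K$ is a closed convex cone, the sum $F_\ast(\widehat x)+K$ is closed and convex, so by strong separation there is a direction $d\neq 0_n$ and a scalar with $w'd > \sup\{(\xi+\eta)'d \mid \xi\in F_\ast(\widehat x),\ \eta\in K\}$. The supremum over the cone $K$ being finite forces $d \in K^0 = C(S,\widehat x)$ and $\sup_{\eta\in K}\eta'd = 0$, hence $w'd > \max_{\xi\in F_\ast(\widehat x)} \xi'd = \max_{i\in I}\max_{\xi_i\in\partial f_i(\widehat x)}\xi_i'd = \max_{i\in I} f_i'(\widehat x; d)$. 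But $w'd \leq \|w\|\,\|d\| \leq \nu\|d\|$, so $\max_{i\in I} f_i'(\widehat x;d) < \nu\|d\|$ with $d\in C(S,\widehat x)$, contradicting the displayed inequality above. This proves $(i)$.

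For $(ii)$, under continuity of $(P)$ and the MFCQ at $\widehat x$, the cone $\overline{\operatorname{cone}}\big(\{\nabla g_t(\widehat x)\mid t\in T_\varepsilon(\widehat x)\}\big)$ stabilizes and coincides with $G_\ast(\widehat x)$ for all small $\varepsilon$ (using compactness of $T$, continuity of the active-gradient map, and the MFCQ to get closedness — essentially the LFMCQ consequences recorded in Theorem~\ref{ThCQs} together with the fact that $T_\varepsilon(\widehat x)\to T(\widehat x)$). Then \eqref{q1210} becomes $\nu\mathbb{B}_n \subseteq F_\ast(\widehat x) + G_\ast(\widehat x)$, which in particular places $0_n$ in the interior of $F_\ast(\widehat x)+G_\ast(\widehat x)$; by \eqref{4.1} this is precisely the perturbed KKT condition at $\widehat x$. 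The main obstacle is the second step — justifying that $C(S,\widehat x)$ admits the $\varepsilon$-active-gradient description; this is exactly where the differentiability of the constraints and the PMFCQ enter, and one must be careful that, without continuity of $(P)$, only the intersection over $\varepsilon>0$ (not a single $\varepsilon$) gives the correct normal cone.
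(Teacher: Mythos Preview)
Your argument is correct and takes a genuinely different route from the paper's.

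The paper avoids directional derivatives and separation altogether: it observes that the isolated efficiency inequality says precisely that $\widehat{x}$ minimizes over $S$ the DC function $\varphi(x)=\max_{i\in I}\{f_i(x)-f_i(\widehat{x})\}-\nu\|x-\widehat{x}\|$, and then invokes a subdifferential criterion for DC minimization (Amahroq--Penot--Say, \cite[Corollary~1]{TIPEAI}) to obtain directly $\partial(\nu\|\cdot-\widehat{x}\|)(\widehat{x})\subseteq \partial\big(\max_i f_i\big)(\widehat{x})+N(S,\widehat{x})$, i.e., $\nu\mathbb{B}_n\subseteq F_\ast(\widehat{x})+N(S,\widehat{x})$. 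The PMFCQ then enters only to rewrite $N(S,\widehat{x})$ as the intersection of $\varepsilon$-active gradient cones via \cite{MORDNG}. Your approach unpacks this same inclusion by hand: you first extract the pointwise inequality $\max_i f_i'(\widehat{x};d)\geq\nu\|d\|$ on $C(S,\widehat{x})$, then recover the set inclusion by separating a hypothetical $w\notin F_\ast(\widehat{x})+K$ from the closed convex set $F_\ast(\widehat{x})+K$ (closedness following from compactness of $F_\ast(\widehat{x})$). Both proofs rely on the same external input---the Mordukhovich--Nghia formula for $N(S,\widehat{x})$ under PMFCQ---so neither is more self-contained in that respect; the paper's version is shorter because the DC lemma packages the passage from the scalar inequality to the subdifferential inclusion, whereas yours is more elementary in that it uses only standard separation and the support-function identity $\max_{\xi\in\partial f_i(\widehat{x})}\xi'd=f_i'(\widehat{x};d)$.

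For $(ii)$ your reasoning is also fine, but the ``stabilization'' phrasing is slightly misleading: what the paper actually uses is that under continuity MFCQ and PMFCQ coincide (\cite[Proposition~2]{MORDNG}), so $(i)$ applies, and then \cite[Corollary~2]{MORDNG} (equivalently, MFCQ $\Rightarrow$ LFMCQ from Theorem~\ref{ThCQs}) identifies the intersection cone with $G_\ast(\widehat{x})$ itself; there is no need to argue that the individual $\varepsilon$-cones eventually equal $G_\ast(\widehat{x})$.
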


\begin{proof}
$(i)$ For each $x\in S$ we consider the DC (difference of convex) function
$\varphi(x):=\max_{i\in I}\{f_{i}(x)-f_{i}(\widehat{x})\}-\nu||x-\widehat
{x}||$. The definition of isolated efficiency means that $\varphi(x)\geq0$ for
all $x\in S$. Since $\varphi(\widehat{x})=0$, then $\widehat{x}$ is a
minimizer of scalar optimization problem $\min_{x\in S}\varphi(x)$. Thus,
using \cite[Corollary 1]{TIPEAI}, we obtain
\begin{equation}
\partial\left(  \nu||\cdot-\widehat{x}||\right)  (\widehat{x})\subseteq
\partial\big(\max_{i\in I}\{f_{i}(\cdot)-f_{i}(\widehat{x})\}\big)(\widehat
{x})+N(S,\widehat{x}).\label{is1}%
\end{equation}
On the other hand, from the well known rules of subdifferential calculus (see,
e.g., \cite[Section VI.4]{HirLem}) one has
\begin{equation}
\partial\left(  \nu||\cdot-\widehat{x}||\right)  (\widehat{x})=\nu
\mathbb{B}_{n}\ \ \ \text{and}\ \ \ \ \partial\big(\max_{i\in I}%
\{f_{i}(.)-f_{i}(\widehat{x})\}\big)(\widehat{x})\subseteq F_{\ast}%
(\widehat{x}).\label{is2}%
\end{equation}
Combining (\ref{is1}), (\ref{is2}), the PMFCQ, and \cite[Proposition 1 and
Theorem 1]{MORDNG}, we get (\ref{q1210}).\newline$(ii)$ Observe that the
continuity of $(P),$ together with \cite[Proposition 2]{MORDNG}, implies the
equivalence between the PMFCQ and the MFCQ. Now, invoking \cite[Corollary
2]{MORDNG} and taking (\ref{q1210}) into account, we get $\nu\mathbb{B}%
_{n}\subseteq\left(  F_{\ast}(\widehat{x})+G_{\ast}(\widehat{x})\right)  ,$ so
that (\ref{4.1}) holds. Hence, the perturbed KKT condition holds at
$\widehat{x}.$
\end{proof}

\begin{theorem}
[Perturbed KKT sufficient condition]\label{isolate1}If the perturbed KKT
condition holds at $\widehat{x}\in S,$ then, $\widehat{x}$ is an isolated
efficient solution for $(P)$.
\end{theorem}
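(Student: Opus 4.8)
The plan is to unwind the geometric characterization (\ref{4.1}) of the perturbed KKT condition and use it to produce the isolated efficiency constant directly. By hypothesis there is $\nu>0$ with $\nu\mathbb{B}_{n}\subseteq F_{\ast}(\widehat{x})+G_{\ast}(\widehat{x})$; equivalently, for every unit vector $w$ there are $\alpha\in\Delta_{+}^{p}$, a finite $T^{\ast}\subseteq T(\widehat{x})$, nonnegative scalars $\beta_{t}$, and subgradients $\xi_{i}\in\partial f_{i}(\widehat{x})$, $\zeta_{t}\in\partial g_{t}(\widehat{x})$ with $\nu w=\sum_{i}\alpha_{i}\xi_{i}+\sum_{t\in T^{\ast}}\beta_{t}\zeta_{t}$. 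First I would fix an arbitrary $x\in S$ with $x\neq\widehat{x}$ and apply this representation to the specific unit vector $w:=(x-\widehat{x})/\|x-\widehat{x}\|$, so that $\nu\|x-\widehat{x}\|=\sum_{i}\alpha_{i}\xi_{i}^{\prime}(x-\widehat{x})+\sum_{t\in T^{\ast}}\beta_{t}\zeta_{t}^{\prime}(x-\widehat{x})$.

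Next I would bound the two sums. For the constraint part, each $t\in T^{\ast}\subseteq T(\widehat{x})$ satisfies $g_{t}(\widehat{x})=0$ and $g_{t}(x)\leq0$ since $x\in S$; the subgradient inequality then gives $\zeta_{t}^{\prime}(x-\widehat{x})\leq g_{t}(x)-g_{t}(\widehat{x})\leq0$, so with $\beta_{t}\geq0$ the whole sum $\sum_{t\in T^{\ast}}\beta_{t}\zeta_{t}^{\prime}(x-\widehat{x})$ is $\leq0$. For the objective part, the subgradient inequality for each convex $f_{i}$ yields $\xi_{i}^{\prime}(x-\widehat{x})\leq f_{i}(x)-f_{i}(\widehat{x})\leq\max_{l\in I}\{f_{l}(x)-f_{l}(\widehat{x})\}$, and since $\alpha\in\Delta_{+}^{p}$ is a convex combination, $\sum_{i}\alpha_{i}\xi_{i}^{\prime}(x-\widehat{x})\leq\max_{l\in I}\{f_{l}(x)-f_{l}(\widehat{x})\}$. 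Combining, $\nu\|x-\widehat{x}\|\leq\max_{l\in I}\{f_{l}(x)-f_{l}(\widehat{x})\}$, which is exactly the isolated efficiency inequality; the case $x=\widehat{x}$ is trivial. Hence $\widehat{x}$ is an isolated efficient solution with the same constant $\nu$.

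There is essentially no hard obstacle here: the argument is a direct dualization of the separation-type statement, exactly parallel to the sufficiency proofs of Theorems \ref{qwe} and \ref{qwe2}, the only new ingredient being the insertion of the scaling factor $\|x-\widehat{x}\|$ so that the inequality comes out in the isolated-efficiency form. The one point deserving a line of care is the reduction from the geometric condition $0_{n}\in\operatorname*{int}(F_{\ast}(\widehat{x})+G_{\ast}(\widehat{x}))$ to the pointwise representation of each $\nu w$: this is just the equivalence already recorded after (\ref{4.1}) together with Carath\'eodory's theorem to ensure the supporting index set $T^{\ast}$ is finite, so I would simply cite it rather than re-derive it.
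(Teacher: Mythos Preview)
Your proposal is correct and is essentially identical to the paper's own proof: both fix $x\in S\setminus\{\widehat{x}\}$, apply the representation $\nu\mathbb{B}_{n}\subseteq F_{\ast}(\widehat{x})+G_{\ast}(\widehat{x})$ to the unit vector $(x-\widehat{x})/\|x-\widehat{x}\|$, take the inner product with $x-\widehat{x}$, and estimate the objective and constraint sums via the subgradient inequalities exactly as you describe. The only cosmetic difference is that the paper does not invoke Carath\'eodory, since finiteness of $T^{\ast}$ is already built into the definition of $G_{\ast}(\widehat{x})=\operatorname*{cone}(G(\widehat{x}))$ as a set of finite nonnegative combinations.
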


\begin{proof}
Let $x\in S\diagdown\{\widehat{x}\}$. Since the perturbed KKT condition holds
at $\widehat{x},$ there exists $\nu>0$ such that $\nu\mathbb{B}_{n}\subseteq$
$\operatorname*{int}\left(  F_{\ast}(\widehat{x})+G_{\ast}(\widehat
{x})\right)  .$ Recalling (\ref{4.1}), and since $\frac{x-\widehat{x}%
}{||x-\widehat{x}||}\in\mathbb{B}_{n}$, there exist $\alpha_{i}\geq0$ and
$\xi_{i}\in\partial f_{i}(\widehat{x}),$ for $i\in I$ with $\sum_{i=1}%
^{p}\alpha_{i}=1$, and $T^{\ast}\subseteq T(\widehat{x})$ with $|T^{\ast
}|<\infty$, $\zeta_{t}\in\partial g_{t}(\widehat{x})$\ and $\beta{_{t}}\geq0,$
for $t\in T^{\ast},$ such that
\begin{equation}
\nu\frac{x-\widehat{x}}{||x-\widehat{x}||}=\sum_{i=1}^{p}\alpha_{i}\xi
_{i}+\sum_{t\in T^{\ast}}\beta{_{t}}\zeta_{t}(\widehat{x}).\label{5.1}%
\end{equation}
Multiplying both members of (\ref{5.1}) by $x-\widehat{x}$, one gets
\begin{align*}
\nu||x-\widehat{x}||  & =\sum_{i=1}^{p}\alpha_{i}\xi_{i}^{\prime}%
(x-\widehat{x})+\sum_{t\in T^{\ast}}\beta{_{t}}\zeta_{t}^{\prime}%
(x-\widehat{x})\\
& \leq\sum_{i=1}^{p}\alpha_{i}\big(f_{i}(x)-f_{i}(\widehat{x})\big)+\sum_{t\in
T^{\ast}}\beta{_{t}}\big(g_{t}(x)-g_{t}(\widehat{x})\big)\\
& \leq\sum_{i=1}^{p}\alpha_{i}\big(f_{i}(x)-f_{i}(\widehat{x})\big)\\
& \leq\max_{i\in I}\big\{f_{i}(x)-f_{i}(\widehat{x})\big\},
\end{align*}
where the last inequality holds as $\sum_{i=1}^{p}\alpha_{i}=1$. This shows
$\widehat{x}$ is an isolated efficient solution for $(P)$.
\end{proof}

In Example \ref{Example1}, $F_{\ast}(\widehat{x})+G_{\ast}(\widehat
{x})=\left[  -2,-1\right]  +\mathbb{R}_{+}=\left[  -2,+\infty\right)  ,$ which
is certainly a neighborhood of $0,$ so that the perturbed KKT condition holds
at $\widehat{x}$ with constant $\nu=2.$ Then, $\widehat{x}$ is an isolated
efficient solution by Theorem \ref{isolate1}. In fact, $\max_{i\in I}%
\{f_{i}(x)-f_{i}(\widehat{x})\}=-2x\geq2\left\vert x\right\vert $ for all
$x\in S=-\mathbb{R}_{+}.$

In Example \ref{Example2}, $F_{\ast}(\widehat{x})+G_{\ast}(\widehat
{x})=\big\{x\in\mathbb{R}^{2}\mid x_{1}\geq-1,\ x_{2}>0\big\}\cup\{\left(
-1,0\right)  \},$ which is not a neighborhood of $\widehat{x}=0_{2}.$ Thus,
the perturbed KKT condition fails at $\widehat{x},$ but we cannot conclude
from Theorem \ref{isolate} that $\widehat{x}$ is not an isolated efficient
solution as $\left(  P\right)  $ is not continuous and MFCQ fails, i.e., we
are in a dubious case. Actually, $\max_{i\in I}\{f_{i}(x)-f_{i}(\widehat
{x})\}=-x_{1}\geq\left\Vert x\right\Vert $ for all $x\in S=-\mathbb{R}_{+}%
^{2}, $ i.e., $\widehat{x}$ is an isolated efficient solution too.

The final result in this paper requires $\left(  P\right)  $ be continuous, so
that it cannot be applied to the above examples.

\begin{theorem}
[Characterization under MFCQ via gap function]Let $\left(  P\right)  $ be a
continuous problem and $\widehat{x}\in S$ be such that all constraint
functions are continuously differentiable around $\widehat{x}$ and MFCQ holds.
Then, $\widehat{x}$ is an isolated efficient solution if and only if there
exists $\nu>0$ such that for all $w\in\nu\mathbb{B}_{n}$ there exist $\xi
\in\prod_{i=1}^{p}\partial f_{i}(\widehat{x})$ and $\lambda\geq0_{p}$ such
that $\vartheta(\widehat{x},\xi-w,\lambda)=0.$
\end{theorem}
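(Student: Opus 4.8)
The plan is to reduce the assertion to the perturbed KKT characterization of isolated efficiency already obtained in Theorems~\ref{isolate} and~\ref{isolate1}, after rewriting the gap-function condition as a containment condition for the set $F_{\ast}(\widehat{x})+N(S,\widehat{x})$. The first step is the elementary equivalence: \emph{for a given $w\in\mathbb{R}^{n}$, there exist $\xi\in\prod_{i=1}^{p}\partial f_{i}(\widehat{x})$ and $\lambda\geq0_{p}$ with $\vartheta(\widehat{x},\xi-w,\lambda)=0$ if and only if $w\in F_{\ast}(\widehat{x})+N(S,\widehat{x})$.} For the ``only if'' part, since $\lambda\geq0_{p}$ is in particular nonzero, I normalize it to $\alpha:=\lambda/\big(\sum_{i=1}^{p}\lambda_{i}\big)\in\Delta_{+}^{p}$ and use the positive homogeneity $\vartheta(\widehat{x},\xi-w,\lambda)=\big(\sum_{i=1}^{p}\lambda_{i}\big)\vartheta(\widehat{x},\xi-w,\alpha)$, so that $\vartheta(\widehat{x},\xi-w,\alpha)=0$; since the supremum defining $\vartheta$ equals $0$ at $y=\widehat{x}$ and $\sum_{i=1}^{p}\alpha_{i}=1$, this is equivalent to $\big(\sum_{i=1}^{p}\alpha_{i}\xi_{i}-w\big)^{\prime}(y-\widehat{x})\geq0$ for all $y\in S$, that is, to $w-\sum_{i=1}^{p}\alpha_{i}\xi_{i}\in N(S,\widehat{x})$, whence $w\in\sum_{i=1}^{p}\alpha_{i}\xi_{i}+N(S,\widehat{x})\subseteq F_{\ast}(\widehat{x})+N(S,\widehat{x})$. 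For the ``if'' part one writes $w=\sum_{i=1}^{p}\alpha_{i}\xi_{i}+z$ with $\alpha\in\Delta_{+}^{p}$, $\xi_{i}\in\partial f_{i}(\widehat{x})$ (nonempty, since each $f_{i}$ is finite-valued and convex on $\mathbb{R}^{n}$) and $z\in N(S,\widehat{x})$, and reverses the computation with the choice $\lambda:=\alpha$.

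Next I invoke the data-qualification diagram. Since $(P)$ is continuous and MFCQ holds at $\widehat{x}$, the implication $\text{MFCQ}\Rightarrow\text{LFMCQ}$ of Diagram~1 in Theorem~\ref{ThCQs} gives that LFMCQ holds at $\widehat{x}$, i.e., $N(S,\widehat{x})=G_{\ast}(\widehat{x})$. Combining this with the equivalence of the first step, the existence of some $\nu>0$ such that for all $w\in\nu\mathbb{B}_{n}$ there exist $\xi\in\prod_{i=1}^{p}\partial f_{i}(\widehat{x})$ and $\lambda\geq0_{p}$ with $\vartheta(\widehat{x},\xi-w,\lambda)=0$ is equivalent to the existence of some $\nu>0$ with $\nu\mathbb{B}_{n}\subseteq F_{\ast}(\widehat{x})+G_{\ast}(\widehat{x})$, i.e., to $0_{n}\in\operatorname*{int}\left(F_{\ast}(\widehat{x})+G_{\ast}(\widehat{x})\right)$, which by~(\ref{4.1}) is precisely the perturbed KKT condition at $\widehat{x}$.

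It only remains to close the loop. If $\widehat{x}$ is an isolated efficient solution, then, because $(P)$ is continuous, the constraint functions are continuously differentiable around $\widehat{x}$, and MFCQ holds at $\widehat{x}$, Theorem~\ref{isolate}$(ii)$ yields the perturbed KKT condition at $\widehat{x}$, hence, by the previous paragraph, the announced gap-function condition. Conversely, if the gap-function condition holds for some $\nu>0$, the previous paragraph gives the perturbed KKT condition at $\widehat{x}$, and Theorem~\ref{isolate1} shows that $\widehat{x}$ is an isolated efficient solution. The delicate points are bookkeeping ones rather than substantial obstacles: the multiplier $\lambda$ ranges over the nonzero vectors of $\mathbb{R}_{+}^{p}$ instead of over $\Delta_{+}^{p}$, so the homogeneity of $\vartheta$ in $\lambda$ must be exploited \emph{before} reading off the supremum; and the gap condition is naturally phrased through $N(S,\widehat{x})$ whereas the perturbed KKT condition is phrased through $G_{\ast}(\widehat{x})$, so the identity $N(S,\widehat{x})=G_{\ast}(\widehat{x})$ furnished by LFMCQ (available here only via continuity together with MFCQ) is the hinge of the argument.
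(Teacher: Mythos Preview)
Your proof is correct, and it reaches the conclusion by a more direct route than the paper. The paper introduces the parametric problem $(P_{w})$, obtained by replacing each $f_{i}$ by $f_{i}-\langle w,\cdot\rangle$, and then chains four equivalences: isolated efficiency of $(P)$ $\Leftrightarrow$ perturbed KKT at $\widehat{x}$ (Theorems~\ref{isolate}$(ii)$ and~\ref{isolate1}); perturbed KKT $\Leftrightarrow$ weak KKT for $(P_{w})$ for all $w\in\nu\mathbb{B}_{n}$; weak KKT for $(P_{w})$ $\Leftrightarrow$ weak efficiency of $\widehat{x}$ in $(P_{w})$ (Corollary~\ref{k4}, using LFMCQ from MFCQ plus continuity); and weak efficiency of $(P_{w})$ $\Leftrightarrow$ the gap condition $\vartheta_{w}(\widehat{x},\eta,\lambda)=0$ (Theorem~\ref{gap1}). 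You instead compute in one stroke that, for fixed $w$, the gap equation $\vartheta(\widehat{x},\xi-w,\lambda)=0$ is equivalent to $w\in F_{\ast}(\widehat{x})+N(S,\widehat{x})$, and then use LFMCQ only to identify $N(S,\widehat{x})$ with $G_{\ast}(\widehat{x})$; this collapses the paper's middle two equivalences into a single elementary normal-cone computation, bypassing the auxiliary problem $(P_{w})$ and the detour through weak efficiency. The paper's route has the advantage of reusing Corollary~\ref{k4} and Theorem~\ref{gap1} as black boxes, while yours is self-contained and makes transparent that the only substantive use of the hypotheses (continuity, differentiability, MFCQ) is in invoking Theorem~\ref{isolate}$(ii)$ and in getting $N(S,\widehat{x})=G_{\ast}(\widehat{x})$.
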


\begin{proof}
Consider the parametric convex MOSIO problem, with parameter $w\in
\mathbb{R}^{n},$%
\[
(P_{w})\ \ \ \ \text{minimize}\ \left(  f_{1}(x)-w,\ldots,f_{p}(x)-w\right)
\ \text{subject to}\ g_{t}(x)\leq0,\ t\in T,
\]
whose gap function is $\vartheta_{w}(x,\eta,\lambda)=\sup_{y\in S}%
\Big\{\sum_{i=1}^{p}\lambda_{i}\eta_{i}^{\prime}(x-y)\Big\},$ with $\eta
_{i}\in\partial\left(  f_{i}-w\right)  (\widehat{x})=\partial f_{i}%
(\widehat{x})-w.$ The change of variables $\xi_{i}=\eta_{i}+w,$ with $\xi
_{i}\in\partial f_{i}(\widehat{x}),$ $i\in I,$ yields the identity
$\vartheta_{w}(x,\eta,\lambda)=\vartheta(\widehat{x},\xi-w,\lambda).$We claim
the equivalence between the following statements:\newline$(i)$ $\widehat{x}$
is an isolated efficient solution of $\left(  P\right)  .$\newline$(ii)$ For
any $w\in\nu\mathbb{B}_{n},$ there exist scalars $\alpha_{i}\geq0$ for $i\in
I$ with $\sum_{i=1}^{p}\alpha_{i}=1$, a finite set $T^{\ast}\subseteq
T(\widehat{x}),\ $and corresponding scalars $\beta_{t}\geq0$ for $t\in
T^{\ast},$ such that%
\[
0_{n}\in\sum_{i=1}^{p}\alpha_{i}\partial\left(  f_{i}-w\right)  (\widehat
{x})+\sum_{t\in T^{\ast}}\beta_{t}\partial g_{t}(\widehat{x}).
\]
\newline$(iii)$ $\widehat{x}$ is a weak efficient solution of $\left(
P_{w}\right)  $ for all $w\in\nu\mathbb{B}_{n}.$\newline$(iv)$ For any
$w\in\nu\mathbb{B}_{n},$ there exist $\xi\in\prod_{i=1}^{p}\partial
f_{i}\left(  x\right)  $ and $\lambda\in\Delta_{+}^{p}$ such that
$\vartheta(\widehat{x},\xi-w,\lambda)=0.$\newline\newline[$(i)\Leftrightarrow
(ii)$] It follows from Theorems \ref{isolate} and \ref{isolate1} as $(ii)$ is
a reformulation of the perturbed KKT condition for $\left(  P\right)
.$[$(ii)\Leftrightarrow(iii)$] It follows from Corollary \ref{k4}, applied to
$(P_{w}),$ taking into account that LFMCQ holds at $\widehat{x}$ by Theorem
\ref{ThCQs}\ (the constraints of $(P_{w})$ and $(P)$ coincide).\ \newline%
[$(iii)\Leftrightarrow(iv)$] It follows from Theorem \ref{gap1}, applied to
$\left(  P_{w}\right)  .$
\end{proof}

\end{document}